\documentclass{article}
\usepackage{graphicx} 
\usepackage{amsthm, amsmath, amssymb}
\usepackage{todonotes,xcolor,hyperref}
\usepackage{caption,subcaption}
\usepackage[normalem]{ulem}
\newtheorem{theorem}{Theorem}[section]
\newtheorem{lemma}[theorem]{Lemma}
\newtheorem{corollary}[theorem]{Corollary}
\theoremstyle{definition}

\newtheorem{remark}[theorem]{Remark}
\newtheorem{example}[theorem]{Example}
\numberwithin{equation}{section}

\title{\sc Optimal allocation in combat models\\ with multiple battlefields}
\makeatletter
\let\@fnsymbol\@arabic
\makeatother
\author{Antonín Slavík\thanks{Charles University, Faculty of Mathematics and Physics, Czech Republic. E-mail: {\tt slavik@karlin.mff.cuni.cz}, ORCID 0000-0003-3941-7375} \mbox{ and }Vladimír Švígler\thanks{Department of Mathematics \& NTIS, Faculty of Applied Sciences, University of West Bohemia, Czech Republic. E-mail: {\tt sviglerv@kma.zcu.cz}, ORCID 0000-0003-0063-3564}}
\date{}

\begin{document}
	
	\maketitle
	
	\allowdisplaybreaks

\begin{abstract}
	We study the allocation of a force of a fixed size across independent
battlefields against a known distribution of opposing forces. Formulating battlefield outcomes through loss and survival functions allows
us to establish allocation results without assuming a specific
system of combat equations. For convex enemy loss functions,
an optimal allocation exists with at most one partially
saturated battlefield. We prove NP-hardness for every prescribed
family of strictly convex, strictly increasing loss functions
satisfying an endpoint normalization, and give an exact
pseudopolynomial algorithm for the special case when the total force size and saturation thresholds are integers. We also derive allocation rules for maximizing
successful engagements, maximizing own survivors, and maximizing
the ratio of enemy losses to own losses. Applications include a power-law family
containing the classical Lanchester, guerrilla, and mixed
combat models, as well as models with implicitly defined
outcomes.
	
\smallskip
\noindent \textbf{Keywords:} OR in defence; resource allocation; combat models; Lanchester equations; NP-hardness
\end{abstract}

\section{Introduction}

How should a force be distributed across several battlefields,
given a known distribution of opposing forces?
The answer depends both on the outcomes of individual engagements
and on the objective of the allocation.
Maximizing enemy losses, maximizing the number of successful
engagements, and minimizing own losses can lead to different
allocations of forces, even when the underlying combat model
is unchanged.

We consider two opposing armies engaged on independent
battlefields. One army's initial allocation is fixed,
while the other allocates a given number of units before
combat begins. No units are subsequently transferred
between battlefields. Rather than imposing a particular
system of combat equations throughout, we describe
battlefield outcomes through loss and survival functions.
These functions give the enemy losses and own survivors
as functions of the initial allocation. Our allocation
results depend on their structural properties and
therefore apply to different models whose outcome
functions satisfy the same assumptions.

Our principal objective is to maximize total enemy losses.
For continuous, nondecreasing loss functions that are convex
up to their saturation thresholds, an optimal allocation
exists in which all but at most one battlefield receive
either no units or exactly the number required to eliminate
the opposing force.
Despite this simple structure, finding an optimal
allocation is NP-hard for every prescribed family of
strictly convex, strictly increasing loss functions
satisfying a natural endpoint normalization.
When the total force size and saturation thresholds are
integers, we give an exact pseudopolynomial algorithm
based on dynamic programming.

We also establish allocation rules for alternative
objectives. Maximizing the number of successful
engagements requires only ordering the saturation
thresholds. Under suitable assumptions, maximizing
own survivors or the ratio of enemy losses to own
losses favors concentration on a single battlefield.
By contrast, maximizing own survivors while requiring
elimination of the opposing force on every battlefield
leads to equality of marginal survival values.
When the survival functions share a common form after
scaling by their thresholds, the optimal allocation
is proportional to those thresholds.

We apply these results to a power-law family containing
the classical Lanchester, unaimed-fire (guerrilla), and mixed
combat models. Two effective exponents determine the convexity
of the enemy loss functions and the monotonicity of
the individual loss ratios, allowing the general
allocation results to be applied uniformly across
the corresponding parameter ranges.
Further examples involve an intelligence function and
implicitly defined battlefield outcomes, illustrating
that explicit loss and survival formulas are not essential.

Section~\ref{sec:allocation-framework} introduces the
framework and illustrates it with the classical
Lanchester model.
Section~\ref{sec:max-enemy-losses} studies the structure,
complexity, and exact solution of enemy loss maximization.
Section~\ref{sec:alternative-objectives} develops allocation
rules for the alternative objectives, and
Section~\ref{sec:combat-applications} applies the results
to specific combat models.

We conclude the introduction with a brief literature survey.
Mathematical models of combat have been studied extensively
in the literature. 
Early attempts to model warfare using differential equations
date back to World War~I, when
F.~W. Lanchester~\cite{Lanchester} and M.~Osipov~\cite{Osipov}
developed models of battles between two forces engaged
in aimed fire combat.
The relation between initial and surviving force sizes
is known as the \textit{Lanchester square law}.
The square law follows from dynamics in which the attrition
rate of each force is assumed to be proportional only
to the size of the enemy force.
Other models describe unaimed fire, in which the attrition
rate is additionally proportional to the size of one's
own force, leading to the \textit{Lanchester linear law}.
Deitchman's guerrilla warfare model combines aimed fire
on one side with unaimed fire on the other~\cite{Deitchman};
we refer to this asymmetric model as the mixed combat model.
Further developments include convex interpolation between
aimed and unaimed fire~\cite{KM}.
Although these models were developed in the twentieth century,
their applications extend to much earlier forms of conflict;
see, for example,~\cite{JM} for a discussion of Lanchester's
laws in the context of human evolution.
Lanchester-type models are suitable not only for describing
human conflict but also have applications in
biology~\cite{AM, CLJA}.

One direction in the development of Lanchester models
involves incorporating multiple opposing forces.
The problem of trilateral conflict is studied under the
classical Lanchester model in~\cite{Kress-three} and under
the unaimed-fire model in~\cite{CCS}.
The latter also includes an analysis of the one-against-many
scenario.

Allocation problems related to combat models have also
been studied in the literature. These studies differ
considerably in their focus, assumptions, modeling
perspectives, and objective functions.
We briefly review the works most relevant to our setting.

The aforementioned one-against-many scenario---in which
the central force divides its attention among multiple
adversaries, while all adversaries attack the central
force as a whole---is studied in several settings.
These include the distribution of fire against multiple
opponents~\cite{JRF} and the incorporation of supply units
that can be targeted by the central force~\cite{NVHTD}.
In a related allocation setting, Sheeba and Ghose~\cite{SG}
study the initial allocation of a defending force against
two attacking forces, allowing redistribution after
decisive events.

The studies in~\cite{Taylor, TB} consider fire allocation
with dynamic adjustment.
In~\cite{Taylor}, the duration of the battle is prescribed,
and the objective function is a linear combination of
surviving friendly and enemy forces, with positive and
negative weights, respectively.
For two adversaries, combat continuing until one of
the forces is eliminated is also analyzed.
In~\cite{TB}, the setting involves two battlefields and
a single fire support unit that allocates its fire
between them, increasing the effectiveness of the
supported force.
The authors analyze how the choice of objective function
(a~weighted ratio or difference of survivors, or a weighted
ratio of casualties) affects optimal policies.
Chen and Zhang~\cite{CZ} study optimal control in a hybrid
dynamical model of combat between a~homogeneous force
and a heterogeneous force comprising multiple troop types.
In contrast to these dynamically adjusted allocation
policies, we consider initial allocations that remain
fixed throughout combat on independent battlefields,
with outcomes determined by elimination in finite time
or asymptotically.

Kaplan, Kress, and Szechtman~\cite{KKS} study force allocation
across insurgent strongholds under imperfect intelligence.
For a fixed set of strongholds to be attacked, they obtain
an allocation proportional to the victory thresholds.
Their broader objective accounts for both government
losses and civilian casualties.

MacKay~\cite{MacKay} considers a model of two armies,
each comprising an arbitrary number of unit types
that can use either aimed or unaimed fire.
The objective function for optimal allocation is derived
from conserved quantities. These quantities are obtained
under certain homogeneity conditions that preserve the
relative effectiveness of different unit types
(called separability in the paper).
They yield a natural cost-weighted ordering of the
opposing forces, with sequential destruction in this
order as an optimal strategy.

From an optimization perspective, our enemy loss
maximization problem is a continuous knapsack problem
with separable convex utilities.
Levi, Perakis, and Romero~\cite{LPR} establish NP-hardness
and develop approximation algorithms for this class
of problems.
Our NP-hardness result concerns prescribed families
of loss functions specified by integer saturation
thresholds, and our exact algorithm exploits the
integrality of these thresholds and the total force size.

We are not aware of previous results on optimal
allocation of units across independent battlefields 
for general classes of loss and survival functions, or even for the
power-law family considered in Section~\ref{general-model}.

    \section{Allocation through loss and survival functions}
\label{sec:allocation-framework}

\subsection{Setting and outcome functions}

We consider two opposing armies, red and green, engaged on $n$
independent battlefields. {Throughout the paper, we take the viewpoint of the red army:
``own'' refers to red units, and ``enemy'' refers to green units.} The initial numbers of green units
$G_1,\dots,G_n>0$ are fixed and known. The red army has $R>0$
units available and chooses an initial allocation
$R_1,\dots,R_n\ge0$. We treat units as a divisible resource,
so the allocations need not be integers. Once combat begins,
no units are transferred between battlefields, and the outcome
on each battlefield depends only on the allocation and the
parameters of that battlefield.

For each battlefield $i\in\{1,\dots,n\}$, let
\[
    k_i\colon[0,\infty)\to[0,G_i],
    \qquad
    s_i\colon[0,\infty)\to[0,\infty)
\]
denote the enemy loss function and the own survival function,
respectively. Thus, $k_i(x)$ is the number of green units
eliminated when $x$ red units are initially allocated to
battlefield $i$, while $s_i(x)$ is the number of surviving
red units. These quantities describe the outcome of the
engagement, whether reached in finite time or only
asymptotically. We assume
\[
    k_i(0)=s_i(0)=0,
    \qquad
    0\le s_i(x)\le x
    \quad\text{for all }x\ge0.
\]
The corresponding own loss function is
\[
    d_i(x)=x-s_i(x),\qquad x\ge0.
\]

We assume that each battlefield has a finite positive
\emph{saturation threshold} $R_i^*$ such that
\[
    k_i(x)<G_i\quad\text{for }0\le x<R_i^*,
    \qquad
    k_i(x)=G_i\quad\text{for }x\ge R_i^*.
\]
Thus, $R_i^*$ is the smallest allocation sufficient to
eliminate all green units on battlefield $i$. We call an
allocation to battlefield $i$ partially saturated if
$0<R_i<R_i^*$, and fully saturated if $R_i=R_i^*$.
Allocations above $R_i^*$ cannot increase enemy losses,
but may increase the number of red survivors.

Our allocation results are formulated directly in terms
of these functions and thresholds. Particular combat
models provide ways of determining the functions and thresholds, but the allocation
arguments do not require a specific system of differential
equations. Additional assumptions, such as convexity,
concavity, or superadditivity, will be imposed separately
for the individual optimization problems.

\subsection{Example: the classical Lanchester model}
\label{sec:lanchester-example}

To illustrate the preceding definitions, consider the
classical Lanchester model on a single battlefield:
\[
    R'(t)=-gG(t),\qquad
    G'(t)=-rR(t),\qquad
    R(0)=x,\quad G(0)=G,
\]
where $G>0$ is fixed, $x\ge0$ is the initial red allocation,
and $r,g>0$ describe the effectiveness of the red and green
armies, respectively. The differential equations apply
while both armies have positive sizes. For background
on this model and its variants, see
\cite{Clausen,BCD,Epstein,Kress}.

The quantity $rR(t)^2-gG(t)^2$ is conserved, since
\[
    \frac{\mathrm d}{\mathrm dt}
    \left(rR(t)^2-gG(t)^2\right)
    =2rR(t)R'(t)-2gG(t)G'(t)=0.
\]
Consequently,
\[
    rR(t)^2-gG(t)^2=rx^2-gG^2,
\]
and the sign of $rx^2-gG^2$ determines which army survives.
The saturation threshold is therefore
\[
    R^*=G\sqrt{\frac gr}.
\]

If $0\le x<R^*$, all red units are eliminated and the number
of green survivors is $\sqrt{G^2-(r/g)x^2}$. If $x>R^*$,
all green units are eliminated and the number of red
survivors is $\sqrt{x^2-(g/r)G^2}$. At $x=R^*$, both
armies tend to zero as $t\to\infty$. Hence the enemy loss
and own survival functions are
\[
    k(x)=
    \begin{cases}
        G-\sqrt{G^2-\dfrac rg x^2},
            &0\le x\le R^*,\\[2mm]
        G,&x>R^*,
    \end{cases}
\]
and
\[
    s(x)=
    \begin{cases}
        0,&0\le x\le R^*,\\[1mm]
        \sqrt{x^2-(R^*)^2},&x>R^*.
    \end{cases}
\]
The own loss function is $d(x)=x-s(x)$.

The function $k$ is increasing and strictly convex on
$[0,R^*]$, whereas $s$ is increasing and strictly concave
on $[R^*,\infty)$; these properties will be useful later.

For multiple battlefields, the same formulas apply with
parameters $G_i,r_i,g_i$ and thresholds
\[
    R_i^*=G_i\sqrt{\frac{g_i}{r_i}},
    \qquad i\in\{1,\dots,n\}.
\]
This gives a concrete instance of the general framework
introduced above.

\subsection{Allocation objectives}
\label{sec:allocation-objectives}

Our principal objective is to maximize total enemy losses:
\[
    \max\left\{
        \sum_{i=1}^n k_i(R_i):
        R_i\ge0,\quad
        \sum_{i=1}^n R_i\le R
    \right\}.
\]
Here, any unallocated red units may be retained in reserve.
Since $k_i$ is constant on $[R_i^*,\infty)$, replacing an
allocation $R_i>R_i^*$ by $R_i^*$ does not change enemy
losses. Thus, restricting allocations to $R_i\le R_i^*$
does not change the optimal value, and the problem becomes
\begin{equation}\label{eq:max-enemy-losses}
    \max\left\{
        \sum_{i=1}^n k_i(R_i):
        0\le R_i\le R_i^*,\quad
        \sum_{i=1}^n R_i\le R
    \right\}. \tag{$O_1$}
\end{equation}
If $R\ge\sum_{i=1}^n R_i^*$, every battlefield can be
saturated and the maximum equals $\sum_{i=1}^n G_i$.
The nontrivial case is therefore
$R<\sum_{i=1}^n R_i^*$.

We also consider several alternative objectives.
First, the red army may wish to maximize the number
of battlefields on which all green units are eliminated:
\begin{equation}\label{eq:max-successful-battles}
    \max\left\{
        \left|\{i\in\{1,\dots,n\}:R_i\ge R_i^*\}\right|:
        R_i\ge0,\quad
        \sum_{i=1}^n R_i\le R
    \right\}.\tag{$O_2$}
\end{equation}
Throughout the paper, success on a battlefield means
complete elimination of the opposing force, possibly
asymptotically; it does not require a positive number
of red survivors.

Second, suppose that all red units must be allocated
and the objective is to maximize their total number
of survivors:
\begin{equation}\label{eq:max-own-survivors}
    \max\left\{
        \sum_{i=1}^n s_i(R_i):
        R_i\ge0,\quad
        \sum_{i=1}^n R_i=R
    \right\}. \tag{$O_3$}
\end{equation}
Since
\[
    \sum_{i=1}^n d_i(R_i)
    =R-\sum_{i=1}^n s_i(R_i),
\]
this is equivalent to minimizing total red losses.
The requirement that all units be allocated excludes
the trivial strategy of avoiding all engagements.
A constrained version of this problem additionally
requires elimination of the green army on every
battlefield:
\begin{equation}\label{eq:max-survivors-all-battlefields}
    \max\left\{
        \sum_{i=1}^n s_i(R_i):
        R_i\ge R_i^*,\quad
        \sum_{i=1}^n R_i=R
    \right\}.\tag{$O_4$}
\end{equation}
This problem is feasible precisely when
$R\ge\sum_{i=1}^n R_i^*$.

Finally, we consider maximizing the ratio of total
enemy losses to total own losses, also called the
kill/death ratio:
\begin{equation}\label{eq:max-kd-ratio}
    \max\left\{
        \frac{\sum_{i=1}^n k_i(R_i)}
             {\sum_{i=1}^n d_i(R_i)}:
        R_i\ge0,\quad
        \sum_{i=1}^n R_i=R
    \right\}, \tag{$O_5$}
\end{equation}
under assumptions ensuring that the denominator
is positive.

These objectives can lead to different optimal allocations under the same battlefield outcome functions.
We first study enemy loss maximization, objective~\eqref{eq:max-enemy-losses},
including its computational complexity and an exact
algorithm, and then establish allocation rules for
the alternative objectives.

\section{Maximizing enemy losses:
structure, complexity, and an exact algorithm}
\label{sec:max-enemy-losses}

We consider the problem~\eqref{eq:max-enemy-losses}
introduced in Section~\ref{sec:allocation-objectives}.
Throughout this section, the restricted loss functions
\[
    k_i\colon[0,R_i^*]\to\mathbb R,
    \qquad i\in\{1,\dots,n\},
\]
are continuous, convex, and nondecreasing, with $k_i(0)=0$. These assumptions hold for the classical Lanchester model
introduced in Section~\ref{sec:lanchester-example}, as well
as for the mixed combat model discussed in
Section~\ref{sec:classical-combat-models}. More generally,
Section~\ref{sec:combat-applications} identifies the parameter
ranges of a power-law family~\eqref{eq:power-law-combat} for which the results below apply.

Convexity means that an additional unit produces at least
as much additional enemy loss when added to a larger
allocation as when added to a smaller one, provided that
neither allocation exceeds the saturation threshold.
This favors concentration, but saturation limits the
benefit of concentrating units on any one battlefield.

We first describe the structure of optimal allocations,
then establish NP-hardness under stronger assumptions,
and finally give an exact algorithm for integer input data.

\subsection{Structure of optimal allocations}
\label{sec:allocation-structure}

Convexity considerably restricts the allocations that
need to be considered.

\begin{theorem}\label{thm:allocation-structure}
Problem~\eqref{eq:max-enemy-losses} has an optimal
solution in which all but at most one component satisfy
\[
    R_i\in\{0,R_i^*\}.
\]
If every $k_i$ is strictly convex, every optimal solution
has this property.

Moreover, there exists an optimal solution satisfying
\[
    \sum_{i=1}^n R_i
    =\min\left(R,\sum_{i=1}^n R_i^*\right).
\]
If every $k_i$ is strictly increasing, every optimal
solution satisfies this equality.
\end{theorem}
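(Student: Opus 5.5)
Existence of an optimal solution follows from compactness: the feasible set of~\eqref{eq:max-enemy-losses} is a nonempty compact polytope and the objective is continuous, so a maximizer exists. The core of the argument is an exchange step. Suppose an allocation has two partially saturated components, $0<R_i<R_i^*$ and $0<R_j<R_j^*$. For $t$ in an interval around $0$, consider the perturbation $R_i+t$, $R_j-t$ with all other components fixed. It preserves the total and remains feasible for $t\in[-\min(R_i,R_j^*-R_j),\,\min(R_i^*-R_i,R_j)]$, an interval containing $0$ in its interior. The function $\varphi(t)=k_i(R_i+t)+k_j(R_j-t)$ is convex on this interval, so its maximum is attained at an endpoint. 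At either endpoint at least one of the two components becomes $0$ or its threshold. If $k_i$ or $k_j$ is strictly convex, then $\varphi$ is strictly convex and some endpoint gives a strictly larger value than $t=0$. This shows that no optimal solution can have two partially saturated components, which is the second sentence of the statement.

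For the first sentence, I start from an optimal solution and apply the exchange at an endpoint maximizing $\varphi$. The value does not decrease, so the new allocation is still optimal, and the number of partially saturated components drops by at least one. Iterating at most $n$ times yields an optimal solution with at most one partially saturated component.

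For the equality $\sum R_i=\min(R,\sum R_i^*)$, I take an optimal solution with $\sum R_i<\min(R,\sum R_i^*)$. Some index then has $R_i<R_i^*$, and I increase $R_i$ by $\min(R_i^*-R_i,\,R-\sum_j R_j)>0$. Monotonicity means the value does not decrease, so optimality is kept. Repeating this at most $n$ times reaches the equality. Combining the two properties requires care, because the exchange step preserves the total while the filling step may create a new partial component. So I do the filling step first, reaching the equality, and then run the exchange iteration, which keeps both the total and optimality. If every $k_i$ is strictly increasing, the filling step strictly increases the value, so any optimal solution with a strict inequality would be contradicted; hence every optimal solution satisfies the equality.

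The only delicate point is the bookkeeping in the exchange step. I must check that the endpoint really saturates or empties one of the two components without moving any other component into the partial range. This holds because only $R_i$ and $R_j$ change, and the endpoints are defined exactly by hitting $0$ or a threshold. Strict convexity of a single function among $k_i,k_j$ already suffices for the strict conclusion, since the sum of a strictly convex and a convex function is strictly convex.
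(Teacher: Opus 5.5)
Your proof is correct, but it takes a more elementary route than the paper. The paper argues globally. It invokes the fact that a continuous convex function on a compact convex polytope attains its maximum at a vertex, and observes that a point with two partially saturated components is not a vertex, because small transfers between them in either direction remain feasible. For the strict case, it uses strict convexity of $\sum_i k_i(R_i)$ to conclude that every maximizer is a vertex.

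You use the same transfer $R_i+t$, $R_j-t$, but restrict the objective to that segment and argue with one-variable convexity, iterating the exchange. This buys three things:
\begin{enumerate}
    \item You avoid the vertex-maximum principle, and the argument is constructive: it converts any given optimal allocation into one with at most one partially saturated battlefield.
    \item By performing the filling step before the exchange iteration, you directly obtain a single optimal solution satisfying both conclusions. The paper handles this in a separate remark by re-optimizing over the polytope with $\sum_i R_i=R$.
    \item Your strictness argument is slightly more general. It needs strict convexity of only one function in each pair of partially saturated components, so it suffices that all but at most one $k_i$ be strictly convex.
\end{enumerate}
The paper's version is shorter once the standard vertex fact is taken for granted. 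Its polytope viewpoint also carries over unchanged to the equality-constrained variant.
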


\begin{proof}
The feasible set
\[
    P=\left\{
        (R_1,\dots,R_n):
        0\le R_i\le R_i^*,\quad
        \sum_{i=1}^n R_i\le R
    \right\}
\]
is a compact convex polytope, and the objective function
is continuous and convex. Its maximum is therefore
attained at a vertex of $P$.
At each vertex, at most one component $R_i$ can lie strictly between 0 and $R_i^*$.
Indeed, if
$0<R_i<R_i^*$ and $0<R_j<R_j^*$ for distinct indices
$i,j$, then sufficiently small transfers between these
two components in either direction preserve feasibility.
The original point is the midpoint of the resulting
distinct feasible points, so it is not a vertex.
This proves the existence assertion.

If every $k_i$ is strictly convex, their sum is strictly
convex. A point that is not a vertex is a nontrivial convex
combination of two distinct feasible points, and strict
convexity implies that it cannot maximize the objective.
Thus every optimal solution is a vertex and has the
stated property.

Finally, suppose that a feasible allocation satisfies
\[
    \sum_{i=1}^n R_i
    <\min\left(R,\sum_{i=1}^n R_i^*\right).
\]
At least one component can then be increased without
violating the constraints. Since the loss functions
are nondecreasing, we can increase components until
the displayed equality in the theorem holds without
decreasing the objective. If every loss function is
strictly increasing, any such increase strictly improves
the objective, so an optimal solution cannot leave
this amount of the total army unused.
\end{proof}

In fact, an optimal allocation can be chosen to satisfy
both conclusions simultaneously. If
$R\ge\sum_iR_i^*$, saturating every battlefield does so.
Otherwise, maximize the objective over the polytope
\[
    0\le R_i\le R_i^*,\qquad
    \sum_{i=1}^n R_i=R.
\]
By Theorem~\ref{thm:allocation-structure}, its maximum
equals that of the original problem. The same vertex
argument shows that it has an optimal solution with
at most one partially saturated battlefield.

The theorem describes the form of an optimal allocation,
but does not identify which battlefields should be
saturated. This remaining choice is the source of
computational difficulty.
Two special cases can be solved immediately:
\begin{itemize}
    \item If $R\ge\sum_{i=1}^n R_i^*$, saturating every
    battlefield gives the maximum
    $\sum_{i=1}^n k_i(R_i^*)=\sum_{i=1}^n G_i$.

    \item If $R\le\min_i R_i^*$, an optimal allocation
    sends all $R$ units to a battlefield maximizing
    $k_i(R)$. Indeed, the vertices of
    $\{(R_1,\dots,R_n):R_i\ge0,\ \sum_iR_i=R\}$
    are precisely these concentrated allocations.
\end{itemize}
Thus, the main difficulty concerns initial army sizes for which
some, but not all, battlefields can be saturated.

\begin{example}\label{ex:two-battlefields}
For two battlefields and
$0<R<R_1^*+R_2^*$, an optimal allocation can be found
on the line $R_1+R_2=R$. Its feasible part is a segment.
At one endpoint, we allocate as many units as possible
to battlefield 2: it receives all $R$ units if
$R\le R_2^*$, and otherwise receives $R_2^*$ units,
with the remaining $R-R_2^*$ units assigned to
battlefield 1. This gives the endpoint
\[
    \left(
        \max\{0,R-R_2^*\},
        \min\{R,R_2^*\}
    \right).
\]
The other endpoint is obtained by allocating as many
units as possible to battlefield 1 and assigning
the remainder to battlefield 2:
\[
    \left(
        \min\{R,R_1^*\},
        \max\{0,R-R_1^*\}
    \right).
\]
See Figure~\ref{fig:two-battlefields-scheme}.
Convexity implies that at least one of these two
allocations is optimal. Thus, for each fixed initial army size,
it suffices to compare at most two candidates.

As an example, consider the case $R_2^*\le R<R_1^*$.
The red army can eliminate all green units on
battlefield 2, but cannot do so on battlefield 1.
Nevertheless, allocating all units to battlefield~1
may inflict greater total enemy losses than saturating
battlefield 2 and assigning the remaining units to
battlefield 1. 
Thus, maximizing enemy losses may favor an allocation
that does not completely eliminate the opposing force
on either battlefield, even though one successful
engagement is feasible.

As the initial army size increases, the endpoints change when
$R$ passes one of the saturation thresholds.
Figure~\ref{fig:two-battlefields-scheme} illustrates
these changes, while
Figure~\ref{fig:two-battlefields-contours} shows the
objective contours for the classical Lanchester model.
The optimal allocation need not be unique, since the
two endpoints may give the same objective value.

\begin{figure}
    \centering
    \begin{subfigure}[b]{0.49\textwidth}
        \centering
        \includegraphics[width=\linewidth]
            {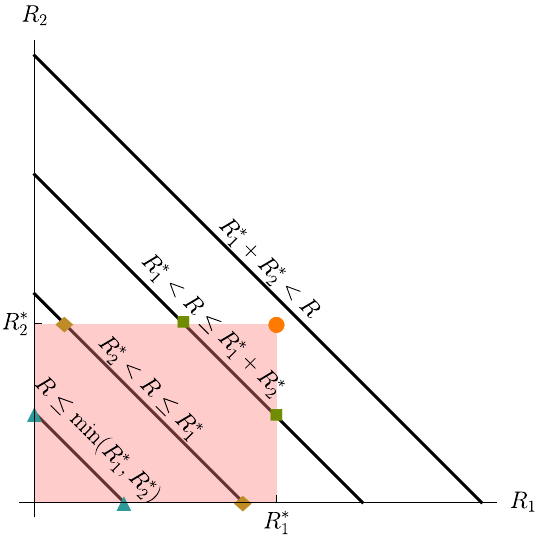}
        \caption{Candidate allocations for different
        initial army sizes relative to the saturation
        thresholds.}
        \label{fig:two-battlefields-scheme}
    \end{subfigure}
    \hfill
    \begin{subfigure}[b]{0.49\textwidth}
        \centering
        \includegraphics[width=\linewidth]
            {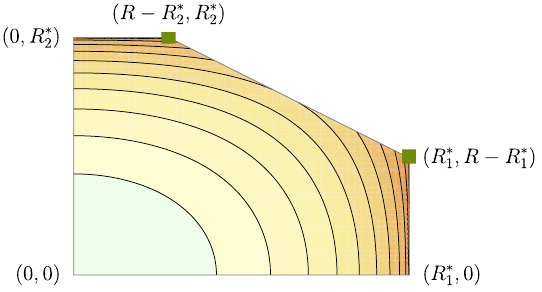}
        \caption{Objective contours for the classical
        Lanchester model. The candidates are the green square
        vertices.}
        \label{fig:two-battlefields-contours}
    \end{subfigure}
    \caption{The allocation problem with two battlefields
    from Example~\ref{ex:two-battlefields}.}
\end{figure}
\end{example}

For an arbitrary number of battlefields, the structural
result reduces the search to allocations determined
by a set of fully saturated battlefields and at most
one partially saturated battlefield. However, choosing
the set of fully saturated battlefields remains a
combinatorial problem, as the next subsection shows.

\subsection{NP-hardness}
\label{sec:NP-hard}

We establish NP-hardness for a restricted class of
instances of~\eqref{eq:max-enemy-losses} with normalized loss functions. The proof
uses a reduction from the Subset Sum Problem: given
positive integers $s_1,\dots,s_n$ and a positive target
integer $S$, determine whether there exists
$I\subseteq\{1,\dots,n\}$ such that
$\sum_{i\in I}s_i=S$.

Consider a prescribed family
$\{\ell_s\}_{s\in\mathbb N}$, where
$\ell_s\colon[0,s]\to\mathbb R$ is the enemy loss
function for a battlefield with saturation threshold
$s$. We impose the normalization
\[
    \ell_s(0)=0,\qquad \ell_s(s)=s.
\]
Thus, $s$ red units exactly suffice to eliminate
$s$ green units, whereas any smaller red allocation
leaves some green units alive. The family is fixed
in advance; each battlefield function is specified
by its positive integer index $s$.

\begin{theorem}\label{np-hard-problem}
Let $\{\ell_s\}_{s\in\mathbb N}$ be a prescribed family
of continuous, strictly convex, strictly increasing
functions $\ell_s\colon[0,s]\to\mathbb R$ satisfying
\[
    \ell_s(0)=0,\qquad \ell_s(s)=s.
\]
Then the following allocation problem is NP-hard:
given positive integers $R,R_1^*,\dots,R_n^*$ with
$R<\sum_{i=1}^n R_i^*$, determine the maximum of
\[
    \sum_{i=1}^n \ell_{R_i^*}(R_i)
\]
subject to
\[
    0\le R_i\le R_i^*,\qquad
    \sum_{i=1}^n R_i\le R.
\]
\end{theorem}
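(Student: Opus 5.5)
Reduce from Subset Sum. Given positive integers $s_1,\dots,s_n$ and a target $S$, we may assume $S<\sum_i s_i$; otherwise the instance is trivial to decide in polynomial time. We build the allocation instance $R_i^*=s_i$ and $R=S$, which satisfies $R<\sum_iR_i^*$. The claim to prove is that the optimal value equals $S$ if and only if some subset of the $s_i$ sums to $S$. Then an algorithm computing the maximum decides Subset Sum, which gives NP-hardness. Because the family $\{\ell_s\}$ is prescribed in advance, each battlefield is encoded only by its integer index, so the reduction is plainly polynomial.

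\emph{Key inequality.} By strict convexity and the normalization $\ell_s(0)=0$, $\ell_s(s)=s$, we have
\[
\ell_s(x)<\tfrac{x}{s}\,\ell_s(s)=x\quad\text{for }0<x<s,
\]
and equality $\ell_s(x)=x$ holds at $x\in\{0,s\}$. Hence for any feasible allocation,
\[
\sum_i\ell_{R_i^*}(R_i)\le\sum_iR_i\le R=S,
\]
so the optimal value is at most $S$.

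\emph{Equivalence.} If $I$ is a subset with $\sum_{i\in I}s_i=S$, set $R_i=s_i$ for $i\in I$ and $R_i=0$ otherwise. This allocation is feasible and attains $S$, so the maximum equals $S$. Conversely, suppose the maximum equals $S$. It is attained because the feasible set is compact and the objective continuous. At an optimal allocation, both inequalities in the displayed chain must be equalities. This forces $\ell_{R_i^*}(R_i)=R_i$ for every $i$, hence $R_i\in\{0,s_i\}$, and it also forces $\sum_iR_i=S$. Therefore $I=\{i:R_i=s_i\}$ is a subset summing to $S$.

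The main subtlety is the notion of ``determining the maximum'' as a computational task. The optimal value may be an irrational number, depending on the family. So I would frame the hardness through the decision question ``is the maximum at least $S$?'' (equivalently, ``does it equal $S$?''). Any procedure that determines the maximum answers this question, since the maximum is always at most $S$. No numerical precision issues arise, because in the ``no'' case we do not need any quantitative gap. We only need that the exact value is strictly less than $S$, which the strict inequality above guarantees.
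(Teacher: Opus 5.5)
Your proposal is correct and follows the paper's proof essentially step for step: the same Subset Sum reduction with $R_i^*=s_i$, $R=S$, the same chord bound $\ell_s(x)\le x$ with equality only at $x\in\{0,s\}$, and the same equality-forcing argument for the converse. Your added remarks on attainment of the maximum (by compactness) and on phrasing hardness through the question ``is the maximum at least $S$?'' are harmless clarifications of points the paper leaves implicit.
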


\begin{proof}
Consider a Subset Sum instance with positive integers
$s_1,\dots,s_n$ and target $S$. We may restrict attention
to instances satisfying $S<\sum_{i=1}^n s_i$, since
the remaining cases can be decided immediately.
Set
\[
    R_i^*=s_i,\qquad R=S,\qquad k_i=\ell_{s_i}.
\]
Since the family is prescribed and each function is
specified by its integer index, this construction
takes polynomial time.

Strict convexity and the endpoint normalization give
\[
    \ell_{s_i}(x)\le x,\qquad 0\le x\le s_i,
\]
with equality if and only if $x=0$ or $x=s_i$.
Consequently, every feasible allocation satisfies
\[
    \sum_{i=1}^n k_{i}(R_i)=\sum_{i=1}^n \ell_{s_i}(R_i)
    \le\sum_{i=1}^n R_i\le R=S.
\]

If the Subset Sum instance has a solution $I$, set
$R_i=s_i$ for $i\in I$ and $R_i=0$ otherwise.
This allocation is feasible and has objective value
$S$, so the maximum equals $S$.

Conversely, suppose that a feasible allocation
$(\bar R_1,\dots,\bar R_n)$ has objective value $S$.
Then
\[
    S=\sum_{i=1}^n \ell_{s_i}(\bar R_i)
    \le\sum_{i=1}^n\bar R_i\le S.
\]
Equality holds throughout. Hence
$\ell_{s_i}(\bar R_i)=\bar R_i$ for every $i$, which
implies $\bar R_i\in\{0,s_i\}$, and
$\sum_i\bar R_i=S$. The indices for which
$\bar R_i=s_i$ therefore solve the Subset Sum instance.

Thus the maximum equals $S$ if and only if the
Subset Sum instance has a solution, proving the claim.
\end{proof}

The proof follows the general idea of Proposition~1 in \cite{LPR}.
There, however, the utility functions form part of the input, and the
reduction uses specially chosen quadratic functions depending on the
instance. Here the family $\{\ell_s\}_{s\in\mathbb N}$ is fixed in
advance. Consequently, NP-hardness has to be established without
choosing the objective functions as part of the reduction; only the
integers $R,R_1^*,\dots,R_n^*$ may vary with the input instance.
This stronger formulation is needed in order to apply the result to
classical combat models, where the loss functions are prescribed by
the model rather than supplied as part of the input. For example, the
normalized classical Lanchester loss functions
\[
    \ell_s(x)=s-\sqrt{s^2-x^2},
    \qquad 0\le x\le s,
\]
satisfy the assumptions of the theorem. Further applications to
combat models will be considered below.

\subsection{An exact pseudopolynomial algorithm}
\label{sec:pseudopolynomial-algorithm}

Problem~\eqref{eq:max-enemy-losses} as considered in this section is a continuous
knapsack problem with separable convex utilities.
Levi, Perakis, and Romero~\cite{LPR} describe
approximation algorithms for this class of problems,
with approximation factors $\frac12$ and
$1-{\mathrm e}^{-1}$, respectively.
Here we give an exact algorithm when the initial army size
$R$ and the saturation thresholds $R_i^*$ are integers.

The allocation variables remain continuous.
Nevertheless, an integral optimal allocation exists:
by the structural result, an optimum can be chosen
that uses the entire army and has at most one
partially saturated component. Every fully saturated
component is integral, and the remaining component
equals the integer initial army size minus a sum of integer
thresholds.

The algorithm tries each possible partially saturated
battlefield $j$. Once $j$ is fixed, a knapsack dynamic
program chooses the fully saturated battlefields
among the remaining indices, and the unallocated
red units are assigned to $j$.

In the complexity analysis, evaluations of the loss
functions, additions, and comparisons are counted
as constant-time operations.

\begin{theorem}\label{pseudopolynomial-algo}
Let $R,R_1^*,\dots,R_n^*$ be positive integers such that
$R<\sum_{i=1}^n R_i^*$. Suppose that
$k_i\colon[0,R_i^*]\to\mathbb R$ are continuous,
convex, and nondecreasing, with $k_i(0)=0$.
Then an optimal allocation for
\eqref{eq:max-enemy-losses}, together with its
objective value, can be found using $O(n^2R)$
operations and $O(R)$ auxiliary space.
\end{theorem}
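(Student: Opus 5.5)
We rely on the structural result in Theorem~\ref{thm:allocation-structure} and the remark after it. Since $R<\sum_i R_i^*$, there is an optimal allocation with $\sum_i R_i=R$ and at most one partially saturated component. Call this the reduced form. In it, a set $F$ of battlefields is fully saturated, at most one index $j\notin F$ receives the remainder $R-\sum_{i\in F}R_i^*\in[0,R_j^*)$, and all other components are zero. If no component is partially saturated, the remainder is zero, and we can treat it as the case $R_j=0$ for any $j\notin F$; such a $j$ exists because $\sum_{i\in F}R_i^*=R<\sum_i R_i^*$. So the optimal value is
\[
\max_{j}\ \max\Bigl\{\sum_{i\in F}G_i' + k_j\Bigl(R-\sum_{i\in F}R_i^*\Bigr) : F\subseteq\{1,\dots,n\}\setminus\{j\},\ 0\le R-\sum_{i\in F}R_i^*\le R_j^*\Bigr\},
\]
where $G_i'=k_i(R_i^*)$. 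Every candidate in this maximum is a feasible allocation. Conversely, the reduced-form optimum appears among the candidates. Hence this expression equals the optimal value of \eqref{eq:max-enemy-losses}.

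For each fixed $j$, we run a 0/1 knapsack dynamic program over the items $i\neq j$, with integer weights $R_i^*$ and values $k_i(R_i^*)$. We keep a one-dimensional array $V[0..R]$, where $V[c]$ is the maximum of $\sum_{i\in F}k_i(R_i^*)$ over subsets $F$ with $\sum_{i\in F}R_i^*=c$ exactly. It is initialized by $V[0]=0$ and $V[c]=-\infty$ for $c>0$. Each item is processed with the standard downward sweep $V[c]\leftarrow\max(V[c],V[c-R_i^*]+k_i(R_i^*))$ for $c=R,\dots,R_i^*$. Using exact sums rather than capacity bounds matters here, because the remainder $R-c$ must be assigned to $j$. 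Afterwards we compute
\[
\max\{V[c]+k_j(R-c): \max(0,R-R_j^*)\le c\le R,\ V[c]>-\infty\}.
\]
This range is nonempty, since $c=0$ works whenever $R\le R_j^*$, and in general the reduced optimum supplies a valid $c$ for the right $j$. If for some $j$ no valid $c$ exists, we skip that $j$. Each $j$ costs $O(nR)$ operations, so all $j$ together cost $O(n^2R)$. Because the array is reused for each $j$, the space is $O(R)$.

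The main obstacle is recovering the allocation itself, not just its value, within $O(R)$ auxiliary space. Storing a full backpointer table would take $O(nR)$ space. I would first try the following. Keep, alongside $V$, an array $\mathrm{last}[c]$ recording the item whose update last improved $V[c]$, and remember the best pair $(j^\ast,c^\ast)$ seen over all $j$. At the end, rerun the dynamic program only for $j^\ast$ to reconstruct $F$.

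Single-array backtracking is unreliable in 0/1 knapsack, because entries get overwritten. So the fallback is a reconstruction by repeated recomputation that remains within the $O(n^2R)$ budget. Process the items $i\neq j^\ast$ in reverse order. For each item $i$, recompute the dynamic program over the items preceding $i$. Then decide whether $i\in F$ by checking whether the current target $c$ is achieved exactly with or without item $i$, and update $c$ accordingly.

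A naive version of this costs $O(n^2R)$ per item, which is too much. I would therefore use a Hirschberg-style divide and conquer. Split the items into two halves, compute forward tables for one half and backward tables for the other, each in $O(R)$ space, and find the split of $c^\ast$ between the halves. Then recurse on each half with its share of the target. This recovers $F$ in $O(nR)$ time and $O(R)$ space.

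Finally, the allocation sets $R_i=R_i^*$ for $i\in F$, $R_{j^\ast}=R-c^\ast$, and $R_i=0$ otherwise. This allocation is feasible and attains the computed optimal value.
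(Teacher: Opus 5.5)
Your computation of the optimal value is the paper's algorithm. You enumerate the possibly partially saturated battlefield $j$, run an exact-sum 0/1 knapsack over the remaining battlefields with weights $R_i^*$ and values $k_i(R_i^*)$, and combine with $k_j(R-c)$ for $\max(0,R-R_j^*)\le c\le R$. Folding the case with no partially saturated battlefield into some $j\notin F$ with $R_j=0$ is a valid shortcut that replaces the paper's separate table $D_0$.

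You diverge only in recovering the allocation, and there your cost estimate is wrong. Recomputing the dynamic program over the items preceding $i$ takes at most $n$ sweeps of length $R+1$, i.e.\ $O(nR)$ operations, not $O(n^2R)$. Repeating this for each of at most $n$ items costs $O(n^2R)$ in total, which already fits the budget. It is exactly the paper's reconstruction: for $k=n,\dots,1$ it recomputes $D_{j^*}(k-1,\cdot)$ using two rows, and includes battlefield $k$ precisely when $D_{j^*}(k-1,m)$ differs from the current target value.

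Your Hirschberg-style divide and conquer is nonetheless correct. For two halves with exact-sum tables $V_L,V_U$, a split $c_1$ with $V_L[c_1]+V_U[c-c_1]$ equal to the target value exists, because an optimal subset restricts to optimal subsets of each half. Shrinking the capacities to the subproblem targets keeps the reconstruction at $O(nR+n\log n)$ operations. So it is a faster reconstruction. But the forward phase already costs $O(n^2R)$, so it does not improve the theorem's bound. It also adds machinery, including a recursion stack that must be charged against the $O(R)$ space, whereas the paper's backward scan needs only two rows and a few scalars.
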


\begin{proof}
An optimal allocation can be chosen to use the entire
initial army size and to consist of a set of fully saturated
battlefields and at most one partially saturated
battlefield. We use $j\in\{1,\dots,n\}$ for the
possible partially saturated battlefield, and $j=0$
for an allocation with no partially saturated
battlefield.

For $j,k\in\{0,\dots,n\}$ and
$m\in\{0,\dots,R\}$, define
\[
    D_j(k,m)=
    \max\left\{
        \sum_{i\in S}k_i(R_i^*):
        S\subseteq\{1,\dots,k\}\setminus\{j\},
        \quad \sum_{i\in S}R_i^*=m
    \right\},
\]
where the maximum of an empty collection is
$-\infty$. Thus, $D_j(k,m)$ is the largest {total loss of the green
army achievable by using exactly $m$ red}  units to saturate
a subset of the first $k$ battlefields, excluding
battlefield $j$.

For each fixed $j$, initialize
\[
    D_j(0,0)=0,\qquad
    D_j(0,m)=-\infty\quad\text{for }m>0.
\]
The recurrence is
\[
D_j(k,m)=
\begin{cases}
\max\bigl\{
    D_j(k-1,m),
    D_j(k-1,m-R_k^*)+k_k(R_k^*)
\bigr\},
    & k\ne j,\ m\ge R_k^*,\\
D_j(k-1,m),
    & \text{otherwise}.
\end{cases}
\]
Indeed, a maximizing subset either omits battlefield
$k$ or includes it, in which case $R_k^*$ units are
used there and the objective increases by $k_k(R_k^*)$.

If $j\ne0$ and the saturated battlefields use $m$
units, the remaining $R-m$ units can be assigned to
battlefield $j$ whenever $0\le R-m\le R_j^*$.
The resulting total loss of the green army is
\[
    D_j(n,m)+k_j(R-m).
\]
The case with no partially saturated battlefield
has value $D_0(n,R)$. Consequently, the desired
maximum is
\begin{equation}\label{final-maximum}
\max\left\{
    D_0(n,R),
    \max_{1\le j\le n}
    \max_{\substack{m\in\{0,\dots,R\}\\
                    0\le R-m\le R_j^*}}
    \bigl(D_j(n,m)+k_j(R-m)\bigr)
\right\}.
\end{equation}
Every finite candidate in this expression corresponds
to a feasible allocation, and the structural result
guarantees that some candidate attains the optimum.

For a fixed $j$,  calculating the array $D_j$ using the recurrence requires $O(nR)$
operations. Running it for $j=0,\dots,n$ therefore
requires $O(n^2R)$ operations. 
The values in row $k$ depend only
on the values in the preceding row $k-1$, so it suffices to store two rows
of length $R+1$.
The tables $D_j$ for different values of $j$ can
be processed successively.

To recover an optimal allocation, retain a maximizing
pair $(j^*,m^*)$ in \eqref{final-maximum}, where
$m^*=R$ if $j^*=0$, together with the value
$D_{j^*}(n,m^*)$. We reconstruct a set of saturated
battlefields attaining this value by recomputing
the necessary rows.

Initially, set
\[
    m=m^*,\qquad V=D_{j^*}(n,m^*),
\]
and initialize the output allocation to zero.
For $k=n,n-1,\dots,1$, skip $k$ if $k=j^*$.
Otherwise, recompute $D_{j^*}(k-1,\cdot)$.
If
\[
    D_{j^*}(k-1,m)=V,
\]
leave battlefield $k$ empty. Otherwise, the recurrence
implies that an optimal subset includes battlefield
$k$; set $R_k=R_k^*$ and update
\[
    m:=m-R_k^*,\qquad
    V:=V-k_k(R_k^*).
\]
Finally, if $j^*\ne0$, set
\[
    R_{j^*}=R-m^*.
\]

Each recomputation requires $O(nR)$ operations and
$O(R)$ auxiliary space. There are at most $n$ such
recomputations, so reconstruction requires
$O(n^2R)$ operations and does not increase the overall
bound. The allocation is recorded directly in the
output vector. Thus the auxiliary space is $O(R)$,
in addition to the space for the input and output.
\end{proof}

The operation bound is pseudopolynomial because it
depends on the numerical value of $R$, rather than
only on the length of its binary representation.
The algorithm therefore does not contradict the
NP-hardness result.

\begin{remark}\label{rem:rational-allocation-data}
If $R,R_1^*,\dots,R_n^*$ are positive rationals, choose
a positive integer $Q$ such that
\[
    \widetilde R=QR,\qquad
    \widetilde R_i^*=QR_i^*
\]
are integers. With $\widetilde R_i=QR_i$, the problem
becomes
\[
    \max\left\{
        \sum_{i=1}^n k_i(\widetilde R_i/Q):
        0\le\widetilde R_i\le\widetilde R_i^*,
        \quad \sum_{i=1}^n\widetilde R_i
              \le\widetilde R
    \right\}.
\]
The functions $\widetilde k_i(x)=k_i(x/Q)$, defined
on $[0,\widetilde R_i^*]$, 
retain continuity,
convexity, and monotonicity. Hence the algorithm
applies, using $O(n^2QR)$ operations and $O(QR)$
auxiliary space, after which the allocation is
rescaled by dividing by $Q$.
\end{remark}

\section{Alternative objectives and optimal allocation rules}
\label{sec:alternative-objectives}

We now consider the alternative objectives introduced in
Section~\ref{sec:allocation-objectives}. Depending on the
objective and the properties of the outcome functions,
optimal allocations can be obtained by ordering saturation
thresholds, concentrating all units on a single battlefield,
or equalizing marginal survival values. 

\subsection{Maximizing the number of successful engagements}
\label{sec:max-successful-engagements}

Suppose that the objective is to maximize the number of
battlefields on which all green units are eliminated~\eqref{eq:max-successful-battles}.
Only the saturation thresholds matter for this problem:
battlefield $i$ is successful precisely when
$R_i\ge R_i^*$. An optimal allocation can therefore be
obtained by considering the battlefields in increasing
order of their thresholds.

\begin{theorem}\label{thm:max-successful-engagements}
Suppose that
\[
    R_1^*\le R_2^*\le\cdots\le R_n^*.
\]
The optimal value for the problem~\eqref{eq:max-successful-battles}
is
\[
    q=\max\left\{
        k\in\{0,\dots,n\}:
        \sum_{i=1}^k R_i^*\le R
    \right\}.
\]
An optimal allocation is obtained by setting
$R_i=R_i^*$ for $i\le q$ and $R_i=0$ for $i>q$.
\end{theorem}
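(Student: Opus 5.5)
The plan is to show two things: the proposed allocation is feasible and achieves $q$, and no feasible allocation achieves more than $q$ successful battlefields. We first note that $q$ is well defined. The set in its definition contains $k=0$, since the empty sum is $0\le R$. It is also an initial segment of $\{0,\dots,n\}$, because the partial sums $\sum_{i=1}^k R_i^*$ are increasing in $k$ (all thresholds are positive).

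\emph{Achievability.} Set $R_i=R_i^*$ for $i\le q$ and $R_i=0$ for $i>q$. The allocation is nonnegative, and its total is $\sum_{i=1}^q R_i^*\le R$ by the definition of $q$. Battlefields $1,\dots,q$ satisfy $R_i\ge R_i^*$, so they are successful. Battlefields $i>q$ receive $0<R_i^*$ units and are not. The objective value is therefore exactly $q$.

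\emph{Upper bound.} Let $(R_1,\dots,R_n)$ be any feasible allocation, and let $I=\{i:R_i\ge R_i^*\}$ with $|I|=p$. Feasibility gives
\[
\sum_{i\in I}R_i^*\le\sum_{i\in I}R_i\le\sum_{i=1}^n R_i\le R.
\]
The key step is an exchange (rearrangement) argument. Because the thresholds are sorted nondecreasingly, any $p$-element subset $I$ satisfies
\[
\sum_{i\in I}R_i^*\ge\sum_{i=1}^p R_i^*.
\]
To see this, list the elements of $I$ as $i_1<\dots<i_p$. Then $i_m\ge m$, so $R_{i_m}^*\ge R_m^*$ for each $m$, and summing over $m$ gives the inequality. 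Combining the two displays, $\sum_{i=1}^p R_i^*\le R$. By the definition of $q$ as a maximum, this forces $p\le q$.

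The argument has no real obstacle. The only point needing care is the comparison $i_m\ge m$, which replaces an arbitrary set of successful battlefields with the $p$ cheapest ones. It also uses only the monotone ordering of the thresholds, not any property of $k_i$ or $s_i$.
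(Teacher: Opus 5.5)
Your proof is correct and takes the paper's approach: the greedy allocation is feasible and yields $q$ successes, and any $p$ successful engagements cost at least the sum of the $p$ smallest thresholds, which forces $p\le q$. You also spell out two points the paper leaves implicit, namely the comparison $i_m\ge m$ and the fact that $q$ is well defined.
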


\begin{proof}
The stated allocation uses at most $R$ units and gives
$q$ successful engagements. Conversely, any allocation
with $k$ successful engagements requires at least
$\sum_{i=1}^k R_i^*$ units, since this is the sum of
the $k$ smallest thresholds. Hence $k\le q$.
\end{proof}

After sorting the thresholds, the optimal number of
successful engagements is found by successively adding
them until the initial army size is exhausted.

\subsection{Maximizing own survivors}
\label{sec:max-own-survivors}

We next consider~\eqref{eq:max-own-survivors}. As explained in Section~\ref{sec:allocation-objectives},
this is equivalent to minimizing total red losses
when all $R$ units must be allocated.

Recall that a function $s\colon[0,R]\to\mathbb R$ is
superadditive if
\[
    s(x+y)\ge s(x)+s(y)
    \qquad\text{whenever }x,y\ge0,\quad x+y\le R.
\]
For a survival function, this means that combining
two allocations on the same battlefield yields at
least as many survivors as the sum of the survivors
from the two allocations considered separately.

\begin{theorem}\label{thm:survivor-concentration}
Let $s_i\colon[0,R]\to\mathbb R$ satisfy $s_i(0)=0$
for every $i$. Suppose that there exists
$j\in\{1,\dots,n\}$ such that $s_j$ is superadditive
and
\[
    s_j(x)\ge s_i(x)
    \qquad\text{for every }i\in\{1,\dots,n\}
    \text{ and }x\in[0,R].
\]
Then the allocation $R_j=R$ and $R_i=0$ for $i\ne j$
maximizes the total number of red survivors.
\end{theorem}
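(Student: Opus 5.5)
The plan is to compare an arbitrary feasible allocation directly with the concentrated one, in two steps. First replace each $s_i$ by the dominating function $s_j$. Then use superadditivity of $s_j$ to merge all the allocations into one.

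Let $(R_1,\dots,R_n)$ be any allocation with $R_i\ge0$ and $\sum_{i=1}^n R_i=R$. Each $R_i$ lies in $[0,R]$, so the domination hypothesis gives
\[
    \sum_{i=1}^n s_i(R_i)\le\sum_{i=1}^n s_j(R_i).
\]

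Next, prove by induction on $m$ that
\[
    \sum_{i=1}^m s_j(x_i)\le s_j\Bigl(\sum_{i=1}^m x_i\Bigr)
\]
for nonnegative $x_1,\dots,x_m$ with $\sum_{i=1}^m x_i\le R$. The case $m=1$ is trivial. For $m=0$, or when zeros are present, use $s_j(0)=0$. For the inductive step, apply superadditivity to the pair
\[
    x=\sum_{i=1}^{m-1}x_i,\qquad y=x_m.
\]
Both are nonnegative and $x+y\le R$, so the pair lies in the domain where superadditivity is assumed. This gives $s_j(x)+s_j(x_m)\le s_j(x+x_m)$, and the induction hypothesis bounds the first $m-1$ terms by $s_j(x)$.

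Applying this with $x_i=R_i$ gives
\[
    \sum_{i=1}^n s_i(R_i)\le\sum_{i=1}^n s_j(R_i)\le s_j(R).
\]
The right-hand side is exactly the objective value of the allocation $R_j=R$, $R_i=0$ for $i\ne j$, because
\[
    s_j(R)+\sum_{i\ne j}s_i(0)=s_j(R).
\]
That allocation is feasible, so it is optimal for~\eqref{eq:max-own-survivors}.

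There is no real obstacle here. The only point needing care is that every partial sum stays inside $[0,R]$, so that superadditivity and the domination inequality are applied only where they are assumed. This holds because all the $R_i$ are nonnegative and sum to $R$.
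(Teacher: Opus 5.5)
Your proposal is correct and follows the paper's proof: pointwise dominance replaces each $s_i$ by $s_j$, repeated superadditivity bounds the sum by $s_j(R)$, and the concentrated allocation attains this bound because $s_i(0)=0$. Your induction simply writes out the ``repeated application of superadditivity'' step that the paper leaves implicit, including the check that all partial sums stay in $[0,R]$.
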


\begin{proof}
For every feasible allocation, pointwise dominance
and repeated application of superadditivity give
\[
    \sum_{i=1}^n s_i(R_i)
    \le\sum_{i=1}^n s_j(R_i)
    \le s_j\left(\sum_{i=1}^n R_i\right)
    =s_j(R).
\]
The allocation concentrating all units on battlefield
$j$ attains this upper bound.
\end{proof}

The following lemma gives sufficient conditions for
a survival function to be superadditive. In
particular, the conditions are satisfied by the classical Lanchester survival function
from Section~\ref{sec:lanchester-example}, as verified in
Section~\ref{sec:combat-applications}.

\begin{lemma}\label{lem:survival-superadditivity}
Suppose that $s\colon[0,\infty)\to\mathbb R$ satisfies
the following conditions:
\begin{enumerate}
    \item There exists $c\ge0$ such that $s(x)=0$
    for $x\in[0,c]$.
    \item The function $s$ is continuous and concave
    on $[c,\infty)$ and continuously differentiable
    on $(c,\infty)$.
    \item $\displaystyle\lim_{x\to\infty}s'(x)=1$.
    \item $s(x)\le x$ for every $x\ge c$.
\end{enumerate}
Then $s$ is superadditive on $[0,\infty)$.
\end{lemma}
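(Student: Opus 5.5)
The plan is to pass from the survival function to the own loss function $d(x)=x-s(x)$ and prove that $d$ is subadditive. Since $x+y-s(x+y)\le x-s(x)+y-s(y)$ is equivalent to $s(x+y)\ge s(x)+s(y)$, subadditivity of $d$ on $[0,\infty)$ is exactly the claimed superadditivity of $s$. The first step is to pin down the shape of $d$. On $[0,c]$ we have $d(x)=x$ by condition~1, and in particular $d(c)=c$. On $[c,\infty)$ condition~4 gives $d\ge0$.

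The key step is to show that $d$ is nonincreasing on $[c,\infty)$. By concavity (condition~2), $s'$ is nonincreasing on $(c,\infty)$. Together with $\lim_{x\to\infty}s'(x)=1$ (condition~3), this forces $s'(x)\ge1$ for every $x>c$. Hence $d'(x)=1-s'(x)\le0$ on $(c,\infty)$. Since $d$ is continuous on $[c,\infty)$ by condition~2, the mean value theorem shows that $d$ is nonincreasing there. Combining the two pieces, $0\le d(z)\le\min(z,c)$ for every $z\ge0$: for $z\le c$ this is $d(z)=z$, and for $z\ge c$ it follows from $0\le d(z)\le d(c)=c$. This monotonicity argument is the only place where the limit condition is used, and I expect it to be the one genuinely substantive point.

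The remaining step is a case analysis on $x,y\ge0$, where by symmetry we may assume $x\ge y$.
\begin{enumerate}
\item If $x+y\le c$, then $d$ is the identity on $[0,c]$, so $d(x+y)=x+y=d(x)+d(y)$.
\item If $x+y>c$ and $x<c$, then $y\le x<c$ as well, so $d(x)+d(y)=x+y>c\ge d(x+y)$.
\item If $x\ge c$, then $x+y\ge x\ge c$ and $d$ is nonincreasing on $[c,\infty)$, so $d(x+y)\le d(x)\le d(x)+d(y)$, using $d(y)\ge0$.
\end{enumerate}
In every case $d(x+y)\le d(x)+d(y)$, which is the subadditivity of $d$ and hence the superadditivity of $s$.
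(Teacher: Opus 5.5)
Your proof is correct and is essentially the paper's argument rewritten for the loss function $d(x)=x-s(x)$. In both proofs the key step is that concavity together with $\lim_{x\to\infty}s'(x)=1$ forces $s'\ge1$ on $(c,\infty)$, and your third case, $d(x+y)\le d(x)\le d(x)+d(y)$, is exactly the paper's estimate $s(x+y)-s(x)\ge y\ge s(y)$; the paper obtains it by integrating $s'\ge1$ where you use the mean value theorem, and it handles the remaining cases by monotonicity of $s$ and $s=0$ on $[0,c]$ where you use the bound $0\le d(z)\le\min(z,c)$.
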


\begin{proof}
Concavity implies that $s'$ is nonincreasing on $(c,\infty)$.
Together with its limit at infinity, this gives
$s'(x)\ge1$ for $x>c$. Consequently, $s$ is
nondecreasing on $[0,\infty)$.

If at least one of $x,y$ does not exceed $c$,
monotonicity and the first assumption give
\[
    s(x+y)\ge s(x)+s(y).
\]
If $x,y>c$, then
\[
    s(x+y)-s(x)
    =\int_x^{x+y}s'(u)\,{\mathrm d}u
    \ge \int_x^{x+y}1\,{\mathrm d}u =y
    \ge s(y).
\]
Thus $s$ is superadditive.
\end{proof}

The first assumption of the lemma describes a survival threshold:
no red units survive below it. The fourth assumption
excludes the creation of units during combat, while
the third says that, for any fixed $x>0$,
adding $x$ units to a very large army gives
approximately $x$ additional survivors.

A useful special case of
Theorem~\ref{thm:survivor-concentration} occurs when
the survival functions belong to a family $s_a$
indexed by a threshold $a>0$. If each $s_a$ is
superadditive and
\[
    a\le b\quad\Longrightarrow\quad
    s_a(x)\ge s_b(x)\quad\text{for every }x\ge0,
\]
then allocating all units to a battlefield with
the smallest threshold maximizes the number of
red survivors.

\subsection{Maximizing own survivors while eliminating
the opposing force on every battlefield}
\label{sec:min-loss-all-bf}

Suppose now that all green units must be eliminated.
We therefore consider~\eqref{eq:max-survivors-all-battlefields}.
If $R=\sum_iR_i^*$, the only feasible allocation is
$R_i=R_i^*$ for every $i$. In what follows, assume
that $R>\sum_iR_i^*$.

Unlike the preceding survivor maximization problem,
the present problem does not permit concentrating all
units on a single battlefield when $n>1$.
Each battlefield must receive at least its saturation
threshold. The remaining question is how to distribute
the additional units to maximize the total number
of survivors.

The derivative $s_i'(R_i)$ measures the rate at which
the number of red survivors increases when additional
units are allocated to battlefield $i$; we call it
the marginal survival value. If two battlefields
receive allocations above their lower bounds and
have different marginal survival values, transferring
a small number of units from the battlefield with
the lower value to the one with the higher value
increases the total number of survivors. This
suggests looking for an allocation satisfying
\[
    s_i'(R_i)=\lambda,\qquad i\in\{1,\dots,n\},
\]
for a common value $\lambda$.

\begin{theorem}
\label{allocation-max-survivors-and-conquer-all}
Suppose that $R>\sum_{i=1}^n R_i^*$ and that
$s_i\colon[R_i^*,\infty)\to\mathbb R$ are continuous,
strictly increasing, strictly concave, and
continuously differentiable on $(R_i^*,\infty)$.
Assume also that
\[
    \lim_{x\to R_i^*+}s_i'(x)=\infty
\]
for every $i$, and that there exists $m\ge0$ such that
\[
    \lim_{x\to\infty}s_i'(x)=m
\]
for every $i$.

Then the problem of maximizing
$\sum_{i=1}^n s_i(R_i)$ subject to
$R_i\ge R_i^*$ and $\sum_iR_i=R$ has a unique solution:
\begin{equation}\label{lambda-optimal}
    R_i=(s_i')^{-1}(\lambda),
    \qquad i\in\{1,\dots,n\},
\end{equation}
where $\lambda\in(m,\infty)$ is the unique solution of
\begin{equation}\label{lambda-equation}
    \sum_{i=1}^n(s_i')^{-1}(\lambda)=R.
\end{equation}
In particular, $R_i>R_i^*$ for every $i$.
\end{theorem}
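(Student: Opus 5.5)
The plan has three parts: show that $\lambda$ exists and is unique, show that the resulting allocation is optimal by concavity, and get uniqueness of the optimum from strict concavity.

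\emph{Existence and uniqueness of $\lambda$.} Strict concavity and continuous differentiability make each $s_i'$ continuous and strictly decreasing on $(R_i^*,\infty)$. Its limits are $\infty$ at $R_i^*+$ and $m$ at $\infty$, so $s_i'$ maps $(R_i^*,\infty)$ bijectively onto $(m,\infty)$. Its inverse $(s_i')^{-1}\colon(m,\infty)\to(R_i^*,\infty)$ is therefore continuous and strictly decreasing, tending to $\infty$ as $\lambda\to m+$ and to $R_i^*$ as $\lambda\to\infty$. Hence
\[
\Phi(\lambda)=\sum_{i=1}^n(s_i')^{-1}(\lambda)
\]
is continuous and strictly decreasing on $(m,\infty)$, with range $\bigl(\sum_iR_i^*,\infty\bigr)$. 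Since $R>\sum_iR_i^*$, the intermediate value theorem gives a unique $\lambda$ solving \eqref{lambda-equation}. The allocation \eqref{lambda-optimal} is then feasible and satisfies $R_i>R_i^*$.

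\emph{Optimality via the concavity (tangent-line) inequality.} Take any feasible $(x_1,\dots,x_n)$ with $x_i\ge R_i^*$ and $\sum x_i=R$. Because $s_i$ is concave on $[R_i^*,\infty)$ and differentiable at the interior point $R_i$, we have
\[
s_i(x_i)\le s_i(R_i)+s_i'(R_i)(x_i-R_i)=s_i(R_i)+\lambda(x_i-R_i).
\]
This holds also at the endpoint $x_i=R_i^*$, by continuity of $s_i$. Summing over $i$ and using $\sum_i(x_i-R_i)=R-R=0$ gives $\sum_i s_i(x_i)\le\sum_i s_i(R_i)$, so the allocation \eqref{lambda-optimal} is optimal. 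Equivalently, this is the KKT/Lagrangian argument with multiplier $\lambda$. The lower-bound constraints are inactive, and this is exactly what the infinite derivative at $R_i^*$ guarantees.

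\emph{Uniqueness of the optimum.} Strict concavity makes the tangent-line inequality strict whenever $x_i\ne R_i$. So any feasible $x\ne(R_1,\dots,R_n)$ gives a strictly smaller total.

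\emph{Main obstacle.} The only delicate points are the endpoint behavior: the range of each $(s_i')^{-1}$, and the validity of the tangent inequality up to $x_i=R_i^*$. Both are handled by the limit assumptions and continuity. I would state these facts carefully rather than appeal to a general Lagrange multiplier theorem.
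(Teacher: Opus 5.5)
Your proposal is correct and follows the paper's proof essentially step by step. The paper likewise shows that each $s_i'$ maps $(R_i^*,\infty)$ onto $(m,\infty)$ and obtains the unique $\lambda$ from the continuous, strictly decreasing function $\sum_i(s_i')^{-1}$. It then proves optimality and uniqueness by summing the tangent-line inequalities $s_i(\widetilde R_i)\le s_i(R_i)+\lambda(\widetilde R_i-R_i)$, with strict concavity making them strict; your remark about the endpoint $x_i=R_i^*$ is a point the paper leaves implicit.
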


\begin{proof}
Strict concavity and continuous differentiability of $s_i$ in $(R_i^*,\infty)$
imply that $s_i'$ is continuous and strictly decreasing
on $(R_i^*,\infty)$. By the assumed limits, it maps
this interval onto $(m,\infty)$.

Consequently, the function
\[
    F(\lambda)=\sum_{i=1}^n(s_i')^{-1}(\lambda)
\]
is continuous and strictly decreasing on $(m,\infty)$,
with
\[
    \lim_{\lambda\to m+}F(\lambda)=\infty,
    \qquad
    \lim_{\lambda\to\infty}F(\lambda)
    =\sum_{i=1}^n R_i^*<R.
\]
There is therefore a unique $\lambda\in(m,\infty)$
such that $F(\lambda)=R$. Set
$R_i=(s_i')^{-1}(\lambda)$. Then $R_i>R_i^*$ and
$\sum_iR_i=R$.

For any other feasible allocation
$(\widetilde R_1,\dots,\widetilde R_n)$,
concavity gives
\[
    s_i(\widetilde R_i)
    \le s_i(R_i)+s_i'(R_i)(\widetilde R_i-R_i)
    =s_i(R_i)+\lambda(\widetilde R_i-R_i).
\]
Summing these inequalities yields
\[
    \sum_{i=1}^n s_i(\widetilde R_i)
    \le\sum_{i=1}^n s_i(R_i),
\]
because both allocations have total size $R$.
Strict concavity makes the inequality strict
whenever the allocations differ, proving uniqueness.
\end{proof}

If the survival functions share a common form after
scaling by their thresholds, the optimal allocation
can be written explicitly.

\begin{corollary}\label{cor:proportional-allocation}
Let $R_1^*,\dots,R_n^*>0$ and
$R>\sum_{i=1}^n R_i^*$.
Suppose that $h\colon[1,\infty)\to\mathbb R$ is
continuous, strictly increasing, strictly concave,
and continuously differentiable on $(1,\infty)$, with
\[
    \lim_{x\to1+}h'(x)=\infty,
    \qquad
    \lim_{x\to\infty}h'(x)=m\in[0,\infty).
\]
For the survival functions
\[
    s_i(x)=R_i^*h\left(\frac{x}{R_i^*}\right),
    \qquad x\ge R_i^*,
\]
the unique allocation maximizing
$\sum_{i=1}^n s_i(R_i)$ subject to
$R_i\ge R_i^*$ and $\sum_iR_i=R$ is
\[
    R_i=\frac{R_i^*}{\sum_{j=1}^n R_j^*}R,
    \qquad i\in\{1,\dots,n\}.
\]
\end{corollary}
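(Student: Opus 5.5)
The plan is to apply Theorem~\ref{allocation-max-survivors-and-conquer-all} to the scaled survival functions $s_i(x)=R_i^*h(x/R_i^*)$. We only need to check its hypotheses and then solve the equation \eqref{lambda-equation} explicitly.

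First we verify the hypotheses for each $s_i$ on $[R_i^*,\infty)$. Continuity, strict monotonicity and strict concavity pass directly from $h$ to $s_i$. This is because $x\mapsto x/R_i^*$ is an increasing affine bijection of $[R_i^*,\infty)$ onto $[1,\infty)$, and multiplying by $R_i^*>0$ preserves these properties. Continuous differentiability on $(R_i^*,\infty)$ follows from the chain rule, which gives
\[
    s_i'(x)=h'\!\left(\frac{x}{R_i^*}\right),\qquad x>R_i^*.
\]
The factors $R_i^*$ cancel here, and this cancellation is the key observation. It gives $\lim_{x\to R_i^*+}s_i'(x)=\lim_{y\to1+}h'(y)=\infty$ and $\lim_{x\to\infty}s_i'(x)=m$ for every $i$, with the same $m$ for all battlefields, exactly as the theorem requires.

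Theorem~\ref{allocation-max-survivors-and-conquer-all} then yields a unique maximizer $R_i=(s_i')^{-1}(\lambda)$, where $\lambda\in(m,\infty)$ solves \eqref{lambda-equation}. As in the proof of that theorem, $h'$ is a continuous, strictly decreasing bijection of $(1,\infty)$ onto $(m,\infty)$. From the formula for $s_i'$ we get $(s_i')^{-1}(\lambda)=R_i^*\,(h')^{-1}(\lambda)$. Writing $t=(h')^{-1}(\lambda)>1$, equation \eqref{lambda-equation} becomes $t\sum_j R_j^*=R$. So $t=R/\sum_jR_j^*$, which indeed exceeds $1$ by the assumption $R>\sum_jR_j^*$. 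Hence $\lambda=h'(t)$ is the unique solution in $(m,\infty)$, and $R_i=R_i^*t$ is the claimed proportional allocation.

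There is no real obstacle. The only point needing care is identifying $(s_i')^{-1}$ correctly through the scaling. An alternative that avoids inverting is to check directly that the proportional allocation satisfies $s_i'(R_i)=h'(t)$, a common value for all $i$. The tangent-line (concavity) argument from the proof of the theorem then gives optimality, and strict concavity gives uniqueness.
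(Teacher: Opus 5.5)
Your proposal is correct and follows the paper's own proof. Both apply Theorem~\ref{allocation-max-survivors-and-conquer-all}, use $s_i'(x)=h'(x/R_i^*)$ to get $(s_i')^{-1}(\lambda)=R_i^*(h')^{-1}(\lambda)$, and solve \eqref{lambda-equation} explicitly. You also explicitly check the hypotheses and that $t=R/\sum_jR_j^*>1$, which the paper leaves implicit.
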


\begin{proof}
The survival functions satisfy the assumptions of
Theorem~\ref{allocation-max-survivors-and-conquer-all}.
Since
\[
    s_i'(x)=h'\left(\frac{x}{R_i^*}\right),
\]
we have
\[
    (s_i')^{-1}(\lambda)
    =R_i^*(h')^{-1}(\lambda).
\]
Consequently, equation~\eqref{lambda-equation} becomes
\[
    (h')^{-1}(\lambda)\sum_{i=1}^n R_i^*=R,
\]
and hence
\[
    (h')^{-1}(\lambda)
    =\frac{R}{\sum_{i=1}^n R_i^*}.
\]
Substitution into~\eqref{lambda-optimal} gives
\[
    R_i=R_i^*(h')^{-1}(\lambda)
    =\frac{R_i^*}{\sum_{j=1}^n R_j^*}R,
    \qquad i\in\{1,\dots,n\}.\tag*{\qedhere}
\]
\end{proof}

Hence, under the common scaling assumption, the
optimal allocation gives every battlefield the
same ratio of allocated units to its saturation
threshold.

\subsection{Maximizing the kill/death ratio}
\label{sec:max-kd-ratio}

Here we consider~\eqref{eq:max-kd-ratio}.
Recall that $k_i(x)$ and $d_i(x)$ denote enemy losses
and own losses on battlefield~$i$. For $x>0$, define
the individual kill/death ratio by
\[
    \kappa_i(x)=\frac{k_i(x)}{d_i(x)}.
\]
The global ratio is
\[
    \kappa(R_1,\dots,R_n)
    =\frac{\sum_{i=1}^n k_i(R_i)}
           {\sum_{i=1}^n d_i(R_i)}.
\]
We assume that $k_i(0)=d_i(0)=0$ and that
$k_i(x),d_i(x)>0$ for $x\in(0,R]$.

For any allocation with $\sum_iR_i=R>0$, the global
ratio is a weighted average of the individual ratios
on the active battlefields:
\[
    \kappa(R_1,\dots,R_n)
    =
    \sum_{i:R_i>0}
    \frac{d_i(R_i)}{\sum_{j=1}^n d_j(R_j)}
    \,\kappa_i(R_i).
\]
The weights are positive and sum to one. In particular,
\[
    \kappa(R_1,\dots,R_n)
    \le\max_{i:R_i>0}\kappa_i(R_i),
\]
which is an instance of Cauchy's mediant inequality.

\begin{theorem}\label{thm:kd-ratio}
Let $R>0$, and assume that $k_i(0)=d_i(0)=0$ and
$k_i(x),d_i(x)>0$ for $x\in(0,R]$. Define
\[
    J=\left\{
        j\in\{1,\dots,n\}:
        \kappa_j(R)=\max_{1\le i\le n}\kappa_i(R)
    \right\}.
\]
If every $\kappa_i$ is nondecreasing on $(0,R]$,
then allocating all $R$ units to any battlefield
$j\in J$ maximizes the global kill/death ratio under
the constraints
\[
    R_i\ge0,\qquad \sum_{i=1}^n R_i=R.
\]
If every $\kappa_i$ is strictly increasing, these
are the only optimal allocations.
\end{theorem}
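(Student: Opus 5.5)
The plan is to combine the weighted-average (mediant) representation of the global ratio, displayed just before the statement, with the monotonicity of each $\kappa_i$. Let $M=\max_{1\le i\le n}\kappa_i(R)$. For any feasible allocation with $\sum_iR_i=R>0$, the set of active battlefields $\{i:R_i>0\}$ is nonempty. The global ratio is then a convex combination of the values $\kappa_i(R_i)$ over active $i$, with positive weights $d_i(R_i)/\sum_j d_j(R_j)$. The denominator is positive because $d_i>0$ on $(0,R]$.

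Each active $R_i$ lies in $(0,R]$, and $\kappa_i$ is nondecreasing there. Hence $\kappa_i(R_i)\le\kappa_i(R)\le M$ for every active $i$. It follows that
\[
\kappa(R_1,\dots,R_n)\le\max_{i:R_i>0}\kappa_i(R_i)\le M.
\]
Allocating all $R$ units to some $j\in J$ gives $\kappa=k_j(R)/d_j(R)=\kappa_j(R)=M$, so this upper bound is attained. That settles the optimality assertion.

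For the uniqueness part, assume every $\kappa_i$ is strictly increasing and take an allocation attaining $M$. Since the weights are strictly positive and sum to one, a weighted average equals $M$, with all terms at most $M$, only if every term equals $M$. So $\kappa_i(R_i)=M$ for each active $i$. If some active $R_i<R$, strict monotonicity would give $\kappa_i(R_i)<\kappa_i(R)\le M$, a contradiction. Hence every active component equals $R$. The total is $R$, so there is exactly one active battlefield $j$, with $R_j=R$ and $\kappa_j(R)=M$, that is, $j\in J$.

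The only delicate point is this equality case. One has to use explicitly that the weights are strictly positive, and that the strict inequality $\kappa_i(R_i)<\kappa_i(R)$ holds for each active battlefield with $R_i<R$. The rest is direct bookkeeping from the mediant representation.
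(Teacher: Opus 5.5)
Your proof is correct and follows the paper's argument: the mediant bound $\kappa\le\max_{i:R_i>0}\kappa_i(R_i)\le\max_i\kappa_i(R)$ obtained from monotonicity, attained by concentrating all units on some $j\in J$. For uniqueness you use the equality case of the weighted average, whereas the paper observes that with two or more active battlefields every active $R_i<R$, so $\max_{i:R_i>0}\kappa_i(R_i)<\max_i\kappa_i(R)$ directly; the two arguments are the same in substance.
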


\begin{proof}
For any feasible allocation, the weighted-average
identity and monotonicity give
\[
    \kappa(R_1,\dots,R_n)
    \le\max_{i:R_i>0}\kappa_i(R_i)
    \le\max_{1\le i\le n}\kappa_i(R).
\]
If $j\in J$, the allocation $R_j=R$ and $R_i=0$
for $i\ne j$ attains this upper bound.

Now suppose that every $\kappa_i$ is strictly
increasing and that an allocation uses at least
two battlefields. Then $R_i<R$ for every active
battlefield, so
\[
    \kappa(R_1,\dots,R_n)
    \le\max_{i:R_i>0}\kappa_i(R_i)
    <\max_{1\le i\le n}\kappa_i(R).
\]
Such an allocation cannot be optimal. Therefore
every optimal allocation concentrates all units
on a single battlefield, which must belong to $J$.
\end{proof}

\begin{remark}
If all green units must be eliminated, the numerator
of the global ratio is fixed:
\[
    \sum_{i=1}^n k_i(R_i)=\sum_{i=1}^n G_i.
\]
Provided that total red losses are positive,
maximizing the kill/death ratio is therefore
equivalent to minimizing total red losses.
With all $R$ units allocated, this is precisely
the survivor maximization problem considered in
Section~\ref{sec:min-loss-all-bf}.
\end{remark}

Table~\ref{tab:allocation-rules} summarizes the allocation
results from Sections~\ref{sec:max-enemy-losses}
and~\ref{sec:alternative-objectives}.
The assumptions listed are sufficient conditions;
the precise hypotheses are given in the corresponding
theorems.

\begin{table}[htbp]
\centering
\caption{Objectives, sufficient conditions, and optimal
allocation rules.}
\label{tab:allocation-rules}
\small
\renewcommand{\arraystretch}{1.3}
\begin{tabular}{
    @{}p{0.29\textwidth}
    p{0.25\textwidth}
    p{0.39\textwidth}@{}
}
\hline
Objective & Sufficient conditions & Optimal allocation rule\\
\hline

\eqref{eq:max-enemy-losses}: Maximize enemy losses
&
Continuous, nondecreasing convex loss functions
&
An optimum has at most one partially saturated
battlefield; choosing the saturated battlefields
remains a combinatorial problem.
\\

\eqref{eq:max-successful-battles}: Maximize the number of successful engagements
&
No additional assumptions
&
Saturate battlefields in nondecreasing order
of their thresholds.
\\

\eqref{eq:max-own-survivors}: Maximize own survivors
&
One superadditive survival function dominates
all the others pointwise
&
Concentrate all units on a battlefield with
the dominating survival function.
\\

\eqref{eq:max-survivors-all-battlefields}: Maximize own survivors while eliminating all
opposing forces
&
Strictly concave survival functions satisfying
the derivative conditions of
Theorem~\ref{allocation-max-survivors-and-conquer-all} and sufficient initial army size
&
Equalize marginal survival values.
Under the common scaling assumption of
Corollary~\ref{cor:proportional-allocation},
allocate proportionally to the thresholds.
\\

\eqref{eq:max-kd-ratio}: Maximize the kill/death ratio
&
Positive losses for positive allocations;
nondecreasing individual ratios
&
Concentrate all units on a battlefield maximizing
$\kappa_i(R)$. If all individual ratios are strictly
increasing, only such allocations are optimal.
\\

\hline
\end{tabular}
\end{table}

\section{Applications to combat models}
\label{sec:combat-applications}

We now apply the preceding allocation results to several
combat models. We begin with a power-law family containing
the classical Lanchester, guerrilla, and mixed combat models.
To illustrate the generality of our results, we then consider models involving an intelligence function
and implicitly defined battlefield outcomes.

Throughout this section, the differential equations describe
combat while both armies have positive sizes. Outcomes include
both elimination in finite time and asymptotic elimination.

\subsection{A power-law family of combat models}
\label{general-model}

Consider the model
\begin{equation}\label{eq:power-law-combat}
    R'(t)=-gG(t)^\alpha R(t)^\beta,\qquad
    G'(t)=-rR(t)^\gamma G(t)^\delta,
    \qquad R(0)=x,\quad G(0)=G,
\end{equation}
where $r,g,G>0$ and
$\alpha,\beta,\gamma,\delta\ge0$; see, for example,
\cite{Epstein,MacKay}. The variable $x$ denotes the initial
red allocation. Define the effective exponents
\[
    A=\gamma-\beta+1,\qquad
    B=\alpha-\delta+1.
\]
We assume $A,B>0$, which gives finite positive saturation
thresholds, as we show later. On the other hand, we do not suppose that $A$, $B$ are integers. 
Combat models with noninteger effective exponents already appear in early
literature, see e.g.~Osipov~\cite[eq.~(12)]{Osipov}, who considered
a relation between initial and remaining force sizes
corresponding to $A=B=3/2$ and $r=g$.

The conserved quantity is
\[
    \frac rA R(t)^A-\frac gB G(t)^B.
\]
Indeed,
\[
    \frac{\mathrm d}{\mathrm dt}
    \left(\frac rA R(t)^A-\frac gB G(t)^B\right)
    =rR(t)^{A-1}R'(t)-gG(t)^{B-1}G'(t)=0.
\]

Both army sizes are nonincreasing, and their terminal
values cannot both be positive, as follows directly
from the differential equations.
Set
\[
    c=\frac BA\frac rg.
\]
The conservation law gives
\[
    G(t)^B-cR(t)^A=G^B-cx^A.
\]
Consequently, the saturation threshold is
\[
    R^*=\left(\frac{G^B}{c}\right)^{1/A},
\]
and the enemy loss function is
\begin{equation}\label{eq:power-law-loss}
    k(x)=
    \begin{cases}
        G-(G^B-cx^A)^{1/B},&0\le x\le R^*,\\
        G,&x>R^*.
    \end{cases}
\end{equation}
The own survival function is
\begin{equation}\label{eq:power-law-survival}
    s(x)=
    \begin{cases}
        0,&0\le x\le R^*,\\
        (x^A-(R^*)^A)^{1/A},&x>R^*.
    \end{cases}
\end{equation}
The four exponents in the differential equations enter
the outcome formulas only through the combinations
$A$ and $B$. Different systems with the same effective
exponents and effectiveness ratio therefore have the
same loss and survival functions, although the time
evolution of combat may differ.

\begin{remark}
The restriction $A,B>0$ excludes unrealistic outcomes. If $A<0<B$, conservation prevents the red army
from tending to zero, so any positive red allocation
eventually eliminates the green army. If $B<0<A$,
the green army survives regardless of the finite
positive red allocation. If $A,B<0$, both armies
tend to zero. These cases do not provide the finite
positive saturation thresholds used in our framework.
The cases $A=0$ or $B=0$ require logarithmic conserved
quantities and are not considered here.
\end{remark}

\subsubsection{Convexity and enemy loss maximization, \eqref{eq:max-enemy-losses}}

The following result characterizes when the loss function
has the convexity required in
Section~\ref{sec:max-enemy-losses}.

\begin{theorem}\label{thm:power-law-convexity}
Let $A,B>0$. The loss function $k$ in
\eqref{eq:power-law-loss} is strictly increasing on
$[0,R^*]$. Its restriction to this interval is convex
if and only if
\[
    A\ge1,\qquad B\ge1.
\]
It is strictly convex if and only if, in addition,
$(A,B)\ne(1,1)$.
\end{theorem}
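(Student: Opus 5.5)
We work with $u(x)=G^B-cx^A$, which decreases from $G^B$ to $0$ on $[0,R^*]$, and write $k(x)=G-u(x)^{1/B}$. Differentiating on $(0,R^*)$ gives
\[
    k'(x)=\frac{cA}{B}\,x^{A-1}u(x)^{1/B-1}>0 .
\]
Together with continuity of $k$ on $[0,R^*]$, this yields strict monotonicity on the closed interval.

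For convexity, we use that a continuous function on $[0,R^*]$ that is differentiable on $(0,R^*)$ is convex if and only if $k'$ is nondecreasing there. It is strictly convex if and only if $k'$ is strictly increasing. Equivalently, we may study the sign of the logarithmic derivative of $k'$. Using $u'=-cAx^{A-1}$,
\[
    \frac{k''(x)}{k'(x)}
    =\frac{A-1}{x}+\Bigl(1-\frac1B\Bigr)\frac{cAx^{A-1}}{u(x)}.
\]
Since $k'>0$, the sign of $k''$ equals the sign of
\[
    \phi(x)=(A-1)\,u(x)+\Bigl(1-\frac1B\Bigr)cAx^{A}
    =(A-1)G^B+c\,x^A\Bigl(A-\frac AB-A+1\Bigr).
\]
This simplifies to
\[
    \phi(x)=(A-1)G^B+c\Bigl(1-\frac AB\Bigr)x^A .
\]
The expression is affine in $t=x^A\in(0,(R^*)^A)$, and $c(R^*)^A=G^B$. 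So $\phi$ interpolates linearly between $\phi(0^+)=(A-1)G^B$ and $\phi(R^{*-})=(A-1)G^B+(1-A/B)G^B=A(1-1/B)G^B$. Hence $\phi\ge0$ on the whole open interval if and only if both endpoint values are nonnegative, that is, $A\ge1$ and $B\ge1$. This proves both directions of the convexity characterization. For necessity, if $A<1$ then $\phi<0$ near $0$, so $k'$ strictly decreases there and $k$ is not convex. If $B<1$, the same happens near $R^*$.

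For strict convexity, assume $A,B\ge1$. The affine function $\phi(t)$ is then nonnegative and vanishes identically only if both endpoint values vanish, which happens exactly when $A=B=1$. In that case $k(x)=cx$ is linear and not strictly convex. Otherwise $\phi>0$ on the interval except possibly at one endpoint, so $k'$ is strictly increasing and $k$ is strictly convex.

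The main obstacle is bookkeeping at the endpoints. When $A<1$ or $B<1$, $k'$ may blow up at $0$ or $R^*$, so the argument must stay on the open interval and use continuity of $k$ to pass convexity to $[0,R^*]$. The algebraic simplification of $\phi$ to an affine function of $x^A$ is the key computation that makes the characterization clean.
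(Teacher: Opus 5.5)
Your proof is correct. The necessity part matches the paper's: both reduce the sign of $k''$ on $(0,R^*)$ to the bracket $(A-1)G^B+(1-A/B)cx^A$ and read off its sign near $0$ (when $A<1$) and near $R^*$ (when $B<1$).

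You differ from the paper in sufficiency and strictness. The paper does not use $k''$ there. It writes $G-k$ as the composition of the concave, nonnegative map $x\mapsto G^B-cx^A$ with the increasing concave map $u\mapsto u^{1/B}$. It gets strict concavity in two cases: from strict concavity of the inner map when $A>1$, or from an affine nonconstant inner map composed with a strictly concave outer map when $A=1<B$. That route needs no differentiability and gives convexity on the closed interval $[0,R^*]$ directly, with no endpoint bookkeeping.

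Your route observes that the bracket is affine in $t=x^A$, with limiting values $(A-1)G^B$ and $A(1-1/B)G^B$. So one computation settles both directions of the characterization, and in the strict case gives the slightly stronger fact $k''>0$ on all of $(0,R^*)$. The cost is passing from the open interval to $[0,R^*]$, which your mean-value criterion handles correctly.

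One small inaccuracy in your closing remark: $k'$ blows up at $R^*$ when $B>1$ (through the factor $u^{1/B-1}$), not when $B<1$. So the endpoint care is needed precisely inside the convex regime. This does not affect the proof.
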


\begin{proof}
Strict monotonicity follows directly from the formula~\eqref{eq:power-law-loss}.

Suppose first that $A,B\ge1$. The function
$x\mapsto G^B-cx^A$ is concave and nonnegative on
$[0,R^*]$, and the function $u\mapsto u^{1/B}$ is
increasing and concave on $[0,\infty)$. Their composition
is therefore concave, so $k$ is convex.

If $A>1$, the inner function is strictly concave,
and the composition is strictly concave because
the outer function is increasing and concave.
If $A=1$ and $B>1$, the inner function is affine
and nonconstant, while the outer function is
strictly concave. Thus $k$ is strictly convex
in both cases. For $A=B=1$, we have $k(x)=cx$,
which is linear.

To prove necessity, differentiation on $(0,R^*)$
gives
\[
    k''(x)=
    \frac{cA}{B}x^{A-2}(G^B-cx^A)^{1/B-2}
    \left((A-1)G^B+
          \left(1-\frac AB\right)cx^A\right).
\]
The factors outside the parentheses are positive.
If $A<1$, the expression in parentheses is negative
for sufficiently small positive $x$. If $B<1$,
its limit as $x\to R^*-$ is
\[
    A\left(1-\frac1B\right)G^B<0.
\]
In either case, $k$ is not convex on $[0,R^*]$.
\end{proof}

Figure~\ref{fig:phi} illustrates how the effective
exponents $A$ and $B$ influence the shape of the
enemy loss function below saturation.

\begin{figure}[ht]
    \centering
    \includegraphics[width=0.7\linewidth]{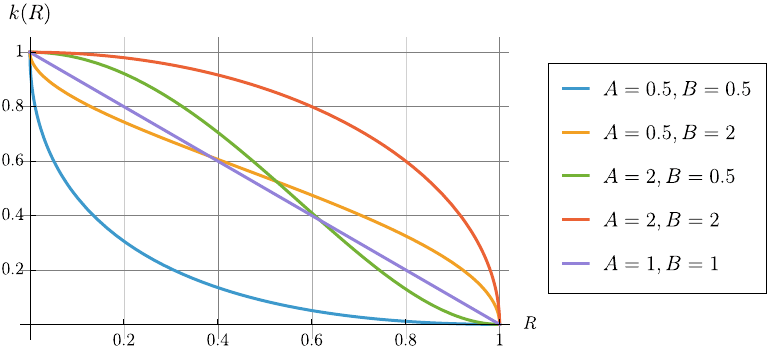}
    \caption{The enemy loss function
    $k(x)=1-(1-x^A)^{1/B}$ on $[0,1]$.
    The thresholds $A=1$ and $B=1$ determine convexity.
    The exponent $A$ controls the behavior near zero,
    while $B$ controls the behavior near saturation.
    The classical Lanchester and guerrilla models
    correspond to $(A,B)=(2,2)$ and $(1,1)$,
    respectively.}
    \label{fig:phi}
\end{figure}

The convexity conditions are equivalent to
$\gamma\ge\beta$ and $\alpha\ge\delta$.
Strict convexity requires at least one of these
inequalities to be strict.

\begin{theorem}\label{thm:power-law-allocation}
Fix nonnegative exponents satisfying
\[
    \gamma\ge\beta,\qquad \alpha\ge\delta,
\]
with at least one inequality strict. Consider
$n$ battlefields governed by
\eqref{eq:power-law-combat}, with these common exponents
and positive parameters $r_i,g_i,G_i$.

The problem of maximizing total enemy losses,~\eqref{eq:max-enemy-losses}, is NP-hard. If the initial army sizes $R$ and the saturation
thresholds $R_1^*,\dots,R_n^*$ are integers, an optimal
allocation can be found using $O(n^2R)$ operations
and $O(R)$ auxiliary space.
\end{theorem}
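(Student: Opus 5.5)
The NP-hardness part will follow from Theorem~\ref{np-hard-problem}, and the algorithmic part from Theorem~\ref{pseudopolynomial-algo}. Both require the loss functions to be continuous, convex (strictly convex for hardness), strictly increasing on $[0,R_i^*]$, with $k_i(0)=0$. Theorem~\ref{thm:power-law-convexity} supplies these properties. The hypotheses $\gamma\ge\beta$, $\alpha\ge\delta$ mean $A,B\ge1$, and strictness of one inequality means $(A,B)\ne(1,1)$. So each $k_i$ restricted to $[0,R_i^*]$ is continuous, strictly increasing, and strictly convex, and $k_i(0)=G_i-(G_i^B)^{1/B}=0$. The algorithmic statement is then immediate from Theorem~\ref{pseudopolynomial-algo}; the case $R\ge\sum_iR_i^*$ is trivial, since we saturate every battlefield.

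For NP-hardness I will exhibit a prescribed normalized family $\{\ell_s\}$ realized inside the power-law model and apply Theorem~\ref{np-hard-problem}. Fix the common exponents. For each positive integer $s$, choose battlefield parameters with $G=s$ and $R^*=s$. From $R^*=(G^B/c)^{1/A}$ this requires $c=s^{B-A}$, that is, $r/g=\frac AB s^{B-A}$. We can take $g=1$ and $r=\frac AB s^{B-A}$; both are positive and computable from $s$.

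The resulting loss function is
\[
    \ell_s(x)=s-\left(s^B-s^{B-A}x^A\right)^{1/B}
    =s\left(1-\left(1-(x/s)^A\right)^{1/B}\right),
\]
which satisfies $\ell_s(0)=0$ and $\ell_s(s)=s$. It is continuous, strictly increasing, and strictly convex by Theorem~\ref{thm:power-law-convexity}. This family depends only on the fixed exponents and on $s$, so it is prescribed in advance, exactly as Theorem~\ref{np-hard-problem} requires.

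Given an instance $R,R_1^*,\dots,R_n^*$ of the problem in Theorem~\ref{np-hard-problem}, we produce in polynomial time the power-law instance with $G_i=R_i^*$, $g_i=1$, and $r_i=\frac AB (R_i^*)^{B-A}$. Its objective~\eqref{eq:max-enemy-losses} coincides with $\sum_i\ell_{R_i^*}(R_i)$, so NP-hardness transfers.

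The main obstacle is to make the reduction legitimate as an input encoding. The parameter $r_i$ involves $s^{B-A}$ with possibly irrational exponents, so it need not be rational. I would handle this by stating that instances are specified through $(G_i, R_i^*)$; equivalently, the battlefield is determined by the integer index via the effectiveness ratio, just as the classical Lanchester example was described via normalized $\ell_s$. The remaining steps are routine substitutions into earlier theorems.
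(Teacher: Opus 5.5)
Your proposal is correct and follows the paper's proof essentially step by step. You use the same normalization $G=q$, $g=1$, $r=\frac AB q^{B-A}$, which gives $R^*=q$ and the family $\ell_q(x)=q\bigl[1-(1-(x/q)^A)^{1/B}\bigr]$; you get its properties from Theorem~\ref{thm:power-law-convexity}, obtain the algorithmic claim from Theorem~\ref{pseudopolynomial-algo} after disposing of the case $R\ge\sum_iR_i^*$, and resolve the encoding issue exactly as the paper does, by specifying each battlefield through its integer index.
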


\begin{proof}
The effective exponents satisfy $A,B\ge1$ and
$(A,B)\ne(1,1)$. For each positive integer $q$, choose
\[
    G=q,\qquad g=1,\qquad
    r=\frac AB q^{B-A}.
\]
Here the exponents are fixed, and each normalized battlefield
instance is specified by the positive integer $q$; the parameters
above are determined by this index.
Then $c=q^{B-A}$ and $R^*=q$. The corresponding loss functions
belong to the family
\[
    \ell_q(x)=
    q\left[
        1-\left(1-\left(\frac{x}{q}\right)^A\right)^{1/B}
    \right],
    \qquad 0\le x\le q.
\]
Each function is continuous, strictly increasing,
and strictly convex, with
$\ell_q(0)=0$ and $\ell_q(q)=q$.
Theorem~\ref{np-hard-problem} therefore applies.

For arbitrary positive battlefield parameters,
Theorem~\ref{thm:power-law-convexity} gives the
assumptions of Theorem~\ref{pseudopolynomial-algo}.
If $R\ge\sum_iR_i^*$, saturating all battlefields
is optimal. Otherwise, the claimed operation
and space bounds follow from that theorem.
\end{proof}

If $A=B=1$, the restricted loss functions are linear:
\[
    k_i(x)=c_i x,\qquad 0\le x\le R_i^*.
\]
In this case, enemy loss maximization is a continuous
linear knapsack problem. It is solved by ordering
the coefficients $c_i$ in nonincreasing order and
saturating battlefields in that order until the
initial army size is exhausted.

\subsubsection{Survivor maximization, \eqref{eq:max-own-survivors} and~\eqref{eq:max-survivors-all-battlefields}}

For $A\ge1$, the survival functions
\[
    s_a(x)=
    \begin{cases}
        0,&0\le x\le a,\\
        (x^A-a^A)^{1/A},&x>a
    \end{cases}
\]
are superadditive. Indeed, $s_a(x)\le x$, and for
$x>a$,
\[
    s_a'(x)
    =\left(1-\left(\frac ax\right)^A\right)^{1/A-1}.
\]
For $A>1$, this derivative is strictly decreasing
from infinity to one, so $s_a$ is strictly concave
on $[a,\infty)$. For $A=1$, it is linear there.
Thus Lemma~\ref{lem:survival-superadditivity} applies
in both cases.

Moreover, $s_a(x)$ is nonincreasing with respect to
$a$. Consequently,
Theorem~\ref{thm:survivor-concentration} gives the
following conclusion.

\begin{corollary}\label{cor:power-law-survivors}
For the power-law model~\eqref{eq:power-law-combat} with common exponent $A\ge1$,
allocating all red units to a battlefield with the
smallest saturation threshold maximizes the total
number of red survivors.
\end{corollary}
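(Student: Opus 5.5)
The plan is to apply Theorem~\ref{thm:survivor-concentration} with the battlefield of smallest threshold as the dominating index $j$. On each battlefield the survival function~\eqref{eq:power-law-survival} has the form $s_i=s_{a_i}$ with $a_i=R_i^*$ and the common exponent $A$. So it suffices to check two things: $s_{a}$ is superadditive on $[0,R]$ for every $a>0$, and $a\le b$ implies $s_a(x)\ge s_b(x)$ for all $x\ge0$. Then pick $j$ with $R_j^*=\min_iR_i^*$. Pointwise dominance $s_j\ge s_i$ holds for every $i$, and the theorem gives optimality of $R_j=R$, $R_i=0$ for $i\neq j$.

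For superadditivity I will invoke Lemma~\ref{lem:survival-superadditivity} with $c=a$, checking its four hypotheses.
\begin{itemize}
\item Vanishing on $[0,a]$ holds by definition.
\item Continuity on $[a,\infty)$ is clear, and $s_a$ is $C^1$ on $(a,\infty)$.
\item For the derivative, I will use $s_a'(x)=\bigl(1-(a/x)^A\bigr)^{1/A-1}$ for $x>a$. For $A>1$ the exponent $1/A-1$ is negative and the base increases in $x$, so $s_a'$ decreases; this gives concavity, and $s_a'\to1$ as $x\to\infty$. For $A=1$ we have $s_a'\equiv1$, so $s_a$ is affine and the same conclusions hold trivially.
\item Finally, $s_a(x)\le x$ because $x^A-a^A\le x^A$.
\end{itemize}
Superadditivity on $[0,\infty)$ then restricts to $[0,R]$, which is what Theorem~\ref{thm:survivor-concentration} requires.

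For monotonicity in the threshold, fix $x$. If $x\le a\le b$, both values are $0$. If $a<x\le b$, then $s_b(x)=0\le s_a(x)$. If $x>b\ge a$, then $x^A-a^A\ge x^A-b^A$, and $u\mapsto u^{1/A}$ is increasing, so $s_a(x)\ge s_b(x)$.

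No step is a genuine obstacle. The only point needing care is the derivative claim in the $A>1$ case: I must confirm that $\bigl(1-(a/x)^A\bigr)^{1/A-1}$ is decreasing, since it is a negative power of an increasing quantity in $(0,1)$, and that its limit is $1$. I must also observe that the battlefield parameters $r_i,g_i,G_i$ enter $s_i$ only through $R_i^*$. This holds because~\eqref{eq:power-law-survival} depends only on $A$ and $R^*$, so all battlefields share one family $s_a$ indexed by their thresholds.
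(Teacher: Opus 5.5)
Your proposal is correct and follows the paper's own route. You verify superadditivity of each $s_a$ through Lemma~\ref{lem:survival-superadditivity} using the derivative formula, use that $s_a$ is nonincreasing in the threshold $a$, and then apply Theorem~\ref{thm:survivor-concentration}; your explicit three-case check of $s_a\ge s_b$ for $a\le b$ fills in a step the paper only asserts.
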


If all green units must be eliminated, the allocation
rule changes. For $A>1$, write
\[
    s_i(x)=R_i^*h\left(\frac{x}{R_i^*}\right),
    \qquad
    h(u)=(u^A-1)^{1/A},\quad u\ge1.
\]
The function $h$ satisfies the assumptions of
Corollary~\ref{cor:proportional-allocation}.
Hence, when $R>\sum_iR_i^*$, the unique optimal
allocation is
\[
    R_i=\frac{R_i^*}{\sum_{j=1}^n R_j^*}R,
    \qquad i\in\{1,\dots,n\}.
\]
For $A=1$, every feasible allocation with
$R_i\ge R_i^*$ gives the same total number of survivors,
namely $R-\sum_iR_i^*$.

\subsubsection{The kill/death ratio}

The own loss function associated with
\eqref{eq:power-law-survival} is
\[
    d(x)=
    \begin{cases}
        x,&0\le x\le R^*,\\
        x-(x^A-(R^*)^A)^{1/A},&x>R^*.
    \end{cases}
\]
Using \eqref{eq:power-law-loss} and $G=c^{1/B}(R^*)^{A/B}$, we calculate the individual
kill/death ratio for $x>0$, namely
\begin{equation}\label{eq:kappa}
    \kappa(x)=\frac{k(x)}{d(x)}=
    \begin{cases}
    \dfrac{c^{1/B}
        \left[(R^*)^{A/B}
        -((R^*)^A-x^A)^{1/B}\right]}{x},
        &0<x\le R^*,\\[3mm]
    \dfrac{c^{1/B}(R^*)^{A/B}}
        {x-(x^A-(R^*)^A)^{1/A}},
        &x>R^*.
    \end{cases}
\end{equation}
For convenience, set $\kappa(0)=0$; continuity at
zero is not required.

\begin{lemma}\label{lem:power-law-kd}
Let $A,B>0$. The function $\kappa$ is nondecreasing
on $[0,\infty)$ if and only if $A,B\ge1$.
It is strictly increasing on $[0,\infty)$ if and
only if $A>1$ and $B\ge1$.
\end{lemma}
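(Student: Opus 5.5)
The plan is to treat the two branches of \eqref{eq:kappa} separately, then glue them at $x=R^*$. Both formulas in \eqref{eq:kappa} give the same value $c^{1/B}(R^*)^{A/B-1}$ at $x=R^*$, since $s(R^*)=0$ and $d(R^*)=R^*$. Hence $\kappa$ is continuous on $(0,\infty)$. Since $\kappa(0)=0<\kappa(x)$ for $x>0$, (strict) monotonicity on $[0,\infty)$ is equivalent to (strict) monotonicity on $(0,R^*]$ together with (strict) monotonicity on $[R^*,\infty)$.

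\emph{Above the threshold.} On $(R^*,\infty)$ the numerator $k=G$ is constant, so $\kappa$ is nondecreasing (strictly increasing) iff $d(x)=x-s(x)$ is nonincreasing (strictly decreasing). I will use the derivative computed before Corollary~\ref{cor:power-law-survivors}, namely
\[
s'(x)=\left(1-\left(\frac{R^*}{x}\right)^A\right)^{1/A-1}.
\]
For $A>1$ this is $>1$, so $d'<0$ and $\kappa$ is strictly increasing. For $A=1$ we get $d\equiv R^*$, so $\kappa$ is constant: it is nondecreasing but not strictly increasing. For $A<1$ the base lies in $(0,1)$ and the exponent is positive, so $s'<1$, $d'>0$, and $\kappa$ strictly decreases; this already gives necessity of $A\ge1$.

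\emph{Below the threshold.} Substitute $t=x/R^*\in(0,1]$ and use $G^B=c(R^*)^A$. Then $\kappa(x)$ is a positive constant times $\varphi(t)/t$, where
\[
\varphi(t)=1-(1-t^A)^{1/B}.
\]
This $\varphi$ is the normalized loss function from Figure~\ref{fig:phi}. For sufficiency I invoke Theorem~\ref{thm:power-law-convexity}. When $A,B\ge1$, $\varphi$ is convex with $\varphi(0)=0$, so the chord slope $\varphi(t)/t$ is nondecreasing. It is strictly increasing when $\varphi$ is strictly convex, i.e.\ $(A,B)\neq(1,1)$; in the case $A=B=1$, $\varphi(t)=t$ and $\kappa$ is constant. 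Combining with the first part yields: nondecreasing iff $A,B\ge1$, and strictly increasing iff $A>1,B\ge1$. The case $A=1,B>1$ is excluded by the constant upper branch, and $A>1$ with $B\ge1$ gives strict convexity.

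\emph{Necessity below the threshold.} This needs an explicit failure of monotonicity when $B<1$ (with $A$ arbitrary), since $A<1$ is already handled above. For $B<1$ I examine $t\to1^-$. The derivative
\[
\varphi'(t)=\frac AB\,t^{A-1}(1-t^A)^{1/B-1}\to0,
\]
so
\[
\left(\frac{\varphi(t)}{t}\right)'=\frac{t\varphi'(t)-\varphi(t)}{t^2}\to-1<0.
\]
Thus $\kappa$ decreases just below $R^*$. As a consistency check, for $A<1$ the behaviour near zero also fails, since $\varphi(t)/t\sim t^{A-1}/B\to\infty$ as $t\to0^+$. The main care needed is keeping the case bookkeeping for strictness straight: $A=1$ kills strictness through the upper branch regardless of $B$, while $B=1,A>1$ still gives strict convexity below the threshold.
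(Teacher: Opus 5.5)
Your proof is correct and follows essentially the same route as the paper: chord-slope monotonicity from the convexity theorem below $R^*$, the sign of $s'-1$ (equivalently the paper's $d(x)^2\kappa'(x)$) above $R^*$, gluing at $R^*$ using $\kappa(0)=0<\kappa(x)$, and the derivative limit as $x\to R^{*}-$ to rule out $B<1$. The normalization $t=x/R^*$ is only cosmetic: your limit $-1$ for $(\varphi(t)/t)'$ is exactly the paper's $-c^{1/B}(R^*)^{A/B-2}$ after rescaling.
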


\begin{proof}
Suppose first that $A,B\ge1$. By
Theorem~\ref{thm:power-law-convexity}, the restricted enemy
loss function $k$ is convex and $k(0)=0$.
For $0<x<y\le R^*$, convexity gives
\[
    \frac{k(x)}{x}\le \frac{k(y)}{y}.
\]
Thus $\kappa$ is
nondecreasing on $(0,R^*]$. If $A>1$, strict
convexity makes this inequality strict.

For $x>R^*$, the numerator of $\kappa$ is constant,
and differentiation gives
\[
    d(x)^2\kappa'(x)
    =
    G\left[
        \left(1-\left(\frac{R^*}{x}\right)^A
        \right)^{-1+1/A}-1
    \right].
\]
Hence $\kappa'$ is negative when $A<1$, zero when
$A=1$, and positive when $A>1$.
The two branches agree at $R^*$, and
$\kappa(x)>0=\kappa(0)$ for every $x>0$.
This proves sufficiency and shows that
nondecreasing behavior requires $A\ge1$, while
strictly increasing behavior requires $A>1$.

Finally, if $0<B<1$, differentiation of the first
branch gives
\[
    \kappa'(x)=\frac{c^{1/B}}{x^2}
    \left[
        -(R^*)^{A/B}
        +((R^*)^A-x^A)^{1/B}
        +\frac AB x^A((R^*)^A-x^A)^{1/B-1}
    \right].
\]
Therefore,
\[
    \lim_{x\to {R^*}^-}\kappa'(x)
    =-c^{1/B}(R^*)^{A/B-2}<0.
\]
Thus $\kappa$ cannot be nondecreasing when $B<1$.
\end{proof}

Combining this lemma with Theorem~\ref{thm:kd-ratio}
shows that, for $A,B\ge1$, an optimal allocation
concentrates all units on a battlefield maximizing
$\kappa_i(R)$. If $A>1$, these are the only optimal
allocations.

Notice the distinction between the conditions for
strict convexity and strict monotonicity of the
ratio. When $A=1$ and $B>1$, the loss function is
strictly convex below saturation, but the ratio
is constant above saturation.

\subsection{Classical models as special cases}
\label{sec:classical-combat-models}

The preceding results include the classical
Lanchester, guerrilla, and mixed combat models.
Their exponents $\alpha$, $\beta$, $\gamma$, $\delta$ and effective exponents $A$, $B$ are summarized below.
\[
\begin{array}{c|c|c}
\text{Model}
&(\alpha,\beta,\gamma,\delta)&(A,B)\\ \hline
\text{Classical Lanchester}&(1,0,1,0)&(2,2)\\
\text{Guerrilla}&(1,1,1,1)&(1,1)\\
\text{Mixed, regular allocation}&(1,0,1,1)&(2,1)\\
\text{Mixed, guerrilla allocation}&(1,1,1,0)&(1,2)
\end{array}
\]
In the last row, the armies have been relabeled so
that the allocating army is again called red.

\subsubsection{The classical Lanchester model}

The classical model was introduced in
Section~\ref{sec:lanchester-example}. Its threshold
and restricted loss function are
\[
    R_i^*=G_i\sqrt{\frac{g_i}{r_i}},
    \qquad
    k_i(x)=G_i-\sqrt{G_i^2-\frac{r_i}{g_i}x^2}.
\]
Since $A=B=2$, the restricted loss functions are
strictly convex.

\begin{corollary}
The problem~\eqref{eq:max-enemy-losses} of maximizing total enemy losses in
the classical Lanchester model is NP-hard.

If $R,R_1^*,\dots,R_n^*$ are integers, an optimal
allocation can be found using $O(n^2R)$ operations
and $O(R)$ auxiliary space.
\end{corollary}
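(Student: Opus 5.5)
This corollary should follow from Theorem~\ref{thm:power-law-allocation}, taking the classical Lanchester exponents $(\alpha,\beta,\gamma,\delta)=(1,0,1,0)$. The plan has three steps.

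First, I would check the hypotheses of Theorem~\ref{thm:power-law-allocation}. We have $\gamma=1\ge0=\beta$ and $\alpha=1\ge0=\delta$, and both inequalities are strict. Hence the effective exponents are $A=\gamma-\beta+1=2$ and $B=\alpha-\delta+1=2$.

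Second, I would confirm that the power-law formulas reduce to the classical ones, so that the problem in the corollary really is an instance of the problem in that theorem. With $A=B=2$ we get $c=r/g$. The threshold is $R^*=(G^2 g/r)^{1/2}=G\sqrt{g/r}$, and \eqref{eq:power-law-loss} becomes $k(x)=G-\sqrt{G^2-(r/g)x^2}$. These agree with the expressions from Section~\ref{sec:lanchester-example}. Both the NP-hardness claim and the $O(n^2R)$ time, $O(R)$ auxiliary space bound (for integer $R,R_i^*$) then transfer directly.

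Third, as an independent check of the hardness part, I would invoke Theorem~\ref{np-hard-problem} directly with the normalized family $\ell_s(x)=s-\sqrt{s^2-x^2}$, which the paper already notes satisfies its hypotheses. This family arises from Lanchester battlefields with $G=s$ and $r=g=1$, giving $R^*=s$. So every Subset Sum instance maps in polynomial time to a classical Lanchester instance with integer data. For the algorithmic part, Theorem~\ref{thm:power-law-convexity} gives convexity and monotonicity of each $k_i$ on $[0,R_i^*]$. If $R\ge\sum_iR_i^*$, saturating every battlefield is optimal; otherwise Theorem~\ref{pseudopolynomial-algo} applies.

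The only point needing care is that the hardness is for the \emph{prescribed} Lanchester family: the reduction may vary only the battlefield parameters, not the functional form. This is guaranteed because the choice $G=q$, $g=1$, $r=(A/B)q^{B-A}=1$ in the proof of Theorem~\ref{thm:power-law-allocation} is a legitimate parameter choice within the model.
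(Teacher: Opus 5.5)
Your proposal is correct, and your third step is exactly the paper's proof: take $r_i=g_i=1$ so that $R_i^*=G_i$ and the loss functions form the normalized family $\ell_q(x)=q-\sqrt{q^2-x^2}$, then apply Theorem~\ref{np-hard-problem} for hardness and Theorem~\ref{pseudopolynomial-algo} for the algorithm. Your primary route through Theorem~\ref{thm:power-law-allocation} with $(A,B)=(2,2)$ is equally valid, because its proof specializes to the same choice $G=q$, $g=r=1$.
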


\begin{proof}
When $r_i=g_i=1$, the threshold is $R_i^*=G_i$,
and the loss functions belong to the normalized
family
\[
    \ell_q(x)=q-\sqrt{q^2-x^2}.
\]
The hardness claim follows from
Theorem~\ref{np-hard-problem}, and the algorithmic
claim from Theorem~\ref{pseudopolynomial-algo}.
\end{proof}

For the alternative objectives, all the preceding
power-law conclusions apply. In particular, survivor
maximization,~\eqref{eq:max-own-survivors}, favors concentration on a battlefield
with the smallest threshold, while survivor
maximization subject to eliminating all green units,~\eqref{eq:max-survivors-all-battlefields}, 
gives the proportional allocation. The individual
kill/death ratios are strictly increasing, so every
allocation maximizing the global kill/death ratio,~\eqref{eq:max-kd-ratio}, concentrates
all units on one battlefield.

\subsubsection{The guerrilla combat model}
\label{sec:guerilla}

The guerrilla model is
\[
    R'(t)=-gR(t)G(t),\qquad
    G'(t)=-rR(t)G(t).
\]
It describes two dispersed forces searching for each
other, with attrition depending on encounters between
them; see \cite{Clausen,BCD,Epstein}.
Its conserved quantity is $rR(t)-gG(t)$.
Neither army is eliminated in finite time.

Here $A=B=1$, and
\[
    R^*=\frac gr G,\qquad
    k(x)=\frac rg x\quad(0\le x\le R^*),
    \qquad
    s(x)=\max\{0,x-R^*\}.
\]
Thus enemy loss maximization,~\eqref{eq:max-enemy-losses}, is solved by sorting
the effectiveness ratios $r_i/g_i$ in nonincreasing
order and saturating battlefields in that order.

Allocating all units to a battlefield with the
smallest threshold maximizes red survivors,~\eqref{eq:max-own-survivors}.
If all green units must be eliminated,~\eqref{eq:max-survivors-all-battlefields}, every feasible
allocation has the same total number of red survivors.

The individual kill/death ratio is constant for
positive allocations:
\[
    \kappa_i(x)=\frac{r_i}{g_i},\qquad x>0.
\]
Consequently, the global kill/death ratio is maximized,~\eqref{eq:max-kd-ratio}, by
allocating units only to battlefields with the
largest effectiveness ratio. Any distribution
among such battlefields is optimal.

\subsubsection{The mixed combat model}

The mixed model, also known as Deitchman's guerrilla warfare
model~\cite{Deitchman}, describes a regular red army
confronting dispersed green guerrillas:
\[
    R'(t)=-gG(t),\qquad
    G'(t)=-rR(t)G(t).
\]
Regular forces (red army) use area fire, whose effectiveness
depends on both their own size and the density
of guerrillas. Guerrillas (green army) can locate the regular
forces and use aimed fire. The conserved quantity is
\[
    rR(t)^2-2gG(t).
\]
If the regular force is defeated while guerrillas survive,
it is eliminated in finite time. Elimination of the guerrillas
always occurs asymptotically, including the case in which
both forces tend to zero.

For allocation of regular forces, $A=2$ and $B=1$,
so
\[
    R^*=\sqrt{\frac{2g}{r}G},
    \qquad
    k(x)=\frac{r}{2g}x^2\quad(0\le x\le R^*),
\]
and
\[
    s(x)=
    \begin{cases}
        0,&0\le x\le R^*,\\
        \sqrt{x^2-(R^*)^2},&x>R^*.
    \end{cases}
\]
Enemy loss maximization,~\eqref{eq:max-enemy-losses} is NP-hard. Explicitly,
choosing
\[
    G=q,\qquad g=1,\qquad r=\frac2q
\]
gives $R^*=q$ and the normalized family
\[
    \ell_q(x)=\frac{x^2}{q},\qquad 0\le x\le q.
\]
The exact algorithm applies when the initial army size and
thresholds are integers. The  allocation
rules for survivor maximization,~\eqref{eq:max-own-survivors},~\eqref{eq:max-survivors-all-battlefields}, are the same as for the classical model,
and the kill/death ratios are strictly increasing,~\eqref{eq:max-kd-ratio}.

Now reverse the allocation problem: fix the initial
regular forces $R_i$ and distribute a total of $G$
guerrilla units. For a single battlefield with
fixed regular force $R$, the guerrilla threshold is
\[
    G^*=\frac{r}{2g}R^2,
\]
and the losses inflicted on the regular army by
a guerrilla allocation $y$ are
\[
    R-\sqrt{R^2-\frac{2g}{r}y},
    \qquad 0\le y\le G^*.
\]
After interchanging the roles of the armies,
the effective exponents are $A=1$ and $B=2$.
Thus the restricted loss function is again
strictly convex. Choosing
\[
    R=q,\qquad g=1,\qquad r=\frac2q
\]
gives $G^*=q$ and the normalized family
\[
    \ell_q(y)=q-\sqrt{q^2-qy},
    \qquad 0\le y\le q.
\]
Hence this allocation problem,~\eqref{eq:max-enemy-losses}, is also NP-hard.
For integral $G,G_1^*,\dots,G_n^*$, it is solvable
using $O(n^2G)$ operations and $O(G)$ auxiliary space.

The guerrilla survival function is
$\max\{0,y-G^*\}$. Consequently, allocating all
guerrillas to a battlefield with the smallest
threshold maximizes their survivors,~\eqref{eq:max-own-survivors}. If every
regular force must be eliminated,~\eqref{eq:max-survivors-all-battlefields}, every feasible
allocation gives the same total number of
guerrilla survivors. The individual kill/death
ratios are nondecreasing, but become constant
above their thresholds,~\eqref{eq:max-kd-ratio}.

\subsection{An intelligence-dependent combat model}
\label{sec:intelligence-model}

Kaplan, Kress, and Szechtman~\cite{KKS} consider
government forces confronting guerrillas dispersed
within a stronghold. Their model has the form
\[
    R'(t)=-gG(t),\qquad
    G'(t)=-rR(t)p(G(t)),
    \qquad R(0)=x,\quad G(0)=G,
\]
where $p$ is an intelligence function representing
the per-shot probability of targeting an insurgent.

For the following calculation, assume that $p$ is
continuous and positive on $(0,G]$ and that
\[
    H(G) < \infty,
\]
where
\[
    H(v):=\int_0^v\frac{u}{p(u)}\,{\mathrm d}u,\quad v\in[0,G].
\]
The conserved quantity is
\[
    \frac r2 R(t)^2-gH(G(t)),
\]
because
\[
    \frac{\mathrm d}{\mathrm dt}
    \left(\frac r2 R(t)^2-gH(G(t))\right)
    =
    rR(t)R'(t)
    -g\frac{G(t)}{p(G(t))}G'(t)=0.
\]
The saturation threshold is
\[
    R^*=\sqrt{\frac{2g}{r}H(G)}.
\]
Whenever $x\ge R^*$, elimination of the green army
leaves
\[
    s(x)=\sqrt{x^2-(R^*)^2}
\]
red survivors. Thus, although the intelligence
function changes the threshold, the survival
function above the threshold has the same form
as in the classical Lanchester model.

For several strongholds and initial army size 
$R>\sum_iR_i^*$, Corollary~\ref{cor:proportional-allocation}
therefore gives the unique survivor-maximizing
allocation subject to eliminating all insurgents:
\[
    R_i=\frac{R_i^*}{\sum_{j=1}^n R_j^*}R.
\]
This recovers the proportional allocation obtained
in \cite{KKS}.

That paper also considers an extension in which
insurgents remaining in unattacked or unsuccessfully
attacked strongholds cause civilian casualties.
The resulting objective combines government losses
and civilian casualties; see \cite{KKS} for details.

\subsection{Implicitly defined loss functions}
\label{sec:implicit-survival}

Explicit outcome formulas are not necessary for
applying the allocation results. Consider the model
from \cite{KM}:
\[
    R'(t)=-gG(t)\bigl(\mu+(1-\mu)R(t)\bigr),
    \qquad
    G'(t)=-rR(t)\bigl(\nu+(1-\nu)G(t)\bigr),
\]
with $R(0)=x$, $G(0)=G$, and $\mu,\nu\in(0,1)$.
The parameters interpolate between aimed-fire
and encounter-dependent attrition terms.

Define
\[
    \Phi_\eta(u)=
    \frac{u}{1-\eta}
    -\frac{\eta}{(1-\eta)^2}
       \ln\left(1+\frac{1-\eta}{\eta}u\right),
    \qquad \eta\in(0,1).
\]
Then
\[
    \Phi_\eta'(u)=\frac{u}{\eta+(1-\eta)u},
    \qquad
    \Phi_\eta''(u)=
       \frac{\eta}{(\eta+(1-\eta)u)^2}.
\]
The conserved quantity is
\[
    r\Phi_\mu(R(t))-g\Phi_\nu(G(t)).
\]

The functions $\Phi_\mu,\Phi_\nu$ are continuous
and strictly increasing on $[0,\infty)$, vanish
at zero, and tend to infinity. Hence there is a
unique saturation threshold satisfying
\[
    \Phi_\mu(R^*)=\frac gr\Phi_\nu(G).
\]
For $0\le x\le R^*$, the number $G_S(x)$ of green
survivors is determined by
\begin{equation}\label{implicit-gs}
    \Phi_\nu(G_S(x))
    =\Phi_\nu(G)-\frac rg\Phi_\mu(x).
\end{equation}
For $x\ge R^*$, the red survival function is
similarly determined by
\[
    \Phi_\mu(s(x))
    =\Phi_\mu(x)-\frac gr\Phi_\nu(G).
\]

To apply the enemy loss results, it suffices to
establish that $k(x)=G-G_S(x)$ is strictly increasing
and strictly convex on $[0,R^*]$.
For $0<x<R^*$, implicit differentiation gives
\[
    \Phi_\nu'(G_S)G_S'
    =-\frac rg\Phi_\mu'(x),
\]
and therefore
\[
    G_S'=-\frac rg
          \frac{\Phi_\mu'(x)}{\Phi_\nu'(G_S)}<0.
\]
Differentiating again yields
\[
    \Phi_\nu''(G_S)(G_S')^2
    +\Phi_\nu'(G_S)G_S''
    =-\frac rg\Phi_\mu''(x),
\]
so
\[
    G_S''=
    -\frac{
        \frac rg\Phi_\mu''(x)
        +\Phi_\nu''(G_S)(G_S')^2
    }{\Phi_\nu'(G_S)}
    <0.
\]
The implicit formula also gives continuity at both
endpoints. Thus $G_S$ is strictly decreasing and
strictly concave, and $k$ is strictly increasing
and strictly convex on $[0,R^*]$.

To establish NP-hardness, fix $\mu,\nu\in(0,1)$
and, for each positive integer $q$, choose
\[
    G=q,\qquad g=1,\qquad
    r=\frac{\Phi_\nu(q)}{\Phi_\mu(q)}.
\]
Here $\mu$ and $\nu$ are fixed, and each normalized
battlefield instance is specified by the positive integer~$q$;
the parameters above are determined by this index.
Then $R^*=q$, and the corresponding normalized
loss function $\ell_q$ is uniquely determined by
\[
    \Phi_\nu(q-\ell_q(x))
    =
    \Phi_\nu(q)
    \left(1-\frac{\Phi_\mu(x)}{\Phi_\mu(q)}\right),
    \qquad 0\le x\le q.
\]
The preceding calculations show that $\ell_q$ is
continuous, strictly increasing, and strictly
convex, with
\[
    \ell_q(0)=0,\qquad \ell_q(q)=q.
\]
Hence Theorem~\ref{np-hard-problem} applies to this
prescribed family.

For arbitrary positive battlefield parameters $g, r$,
the loss functions retain these properties.
Consequently, if $R,R_1^*,\dots,R_n^*$ are integers,
Theorem~\ref{pseudopolynomial-algo} gives an exact
algorithm using $O(n^2R)$ operations and $O(R)$
auxiliary space. 

This example illustrates the separation between
combat dynamics and allocation: implicit relations
can establish the required properties of the
outcome functions even when no explicit formulas
for those functions are available.

%

\end{document}